\newtheorem{thm}{Theorem}[section] \newtheorem{pro}[thm]{Proposition}
\newtheorem{cor}[thm]{Corollary}
\numberwithin{equation}{section}
\newtheorem{proposition}[thm]{Proposition}
\newtheorem{lm}[thm]{Lemma}
\newtheorem{co}[thm]{Corollary}
\theoremstyle{remark} 
\theoremstyle{definition} 
\newtheorem{rmk}[thm]{Remark} 
\newtheorem*{note}{Note}
\newtheorem*{notation}{Notation}
\newtheorem{example}[thm]{Example}
\newtheorem{definition}[thm]{Definition}
\DeclareMathOperator*{\spec}{Spec} 
\DeclareMathOperator*{\card}{card} 
\DeclareMathOperator*{\Aut}{Aut} 
\DeclareMathOperator*{\Gal}{Gal} \DeclareMathOperator*{\Ind}{Ind}
\DeclareMathOperator*{\rank}{rank}\DeclareMathOperator*{\Inn}{Inn}
\DeclareMathOperator*{\Spec}{Spec}
\DeclareMathOperator*{\Pic}{Pic}
\DeclareMathOperator*{\Div}{Div}
\DeclareMathOperator*{\e}{\acute{e}t}
\DeclareMathOperator*{\id}{Id}
\DeclareMathOperator*{\dv}{div}
\newcommand{\chash}{\mathcal{\#}} 
 \newcommand{\QQ}{\mathbb{Q}}
\newcommand{\ZZ}{\mathbb{Z}} \newcommand{\Aff}{\mathbb{A}}
\newcommand{\PP}{\mathbb{P}} \newcommand{\FF}{\mathbb{F}}
\newcommand{\cO}{\mathcal{O}}
\newcommand{\onto}{\twoheadrightarrow}
\newcommand{\Z}{\mathbb{Z}}
\begin{document}
\title[Criterion for solving embedding problems for fundamental group]{A criterion for solving embedding problems for the \'etale fundamental group of curves}
\author{Manish Kumar} \author{Poulami Mandal}
\begin{abstract}
Let $C$ be an affine curve over an algebraically closed field $k$ of
characteristic $p>0$. Given an embedding problem $(\beta:\Gamma\longrightarrow G,
\alpha: \pi^{\e}_1(C)\longrightarrow G)$ for $\pi_1^{\e}(C)$ where $\beta$ is a surjective homomorphism of finite groups with prime-to-$p$ kernel $H$, we discuss when an $H$-cover of
the $G$-cover of $C$ corresponding to $\alpha$ is a solution. When $H$ is
abelian and $G$ is a $p$-group, some necessary and sufficient conditions
for the solvability of the embedding problems are given in terms of the action of $G$ on a certain generalization of $m$-torsion of the Picard group.
\end{abstract}
\maketitle

\section{Introduction}
Let $C$ be a smooth affine curve over an algebraically closed field $k$ of characteristic $p>0$. The structure of the \'etale fundamental group of $C$ is not quite well understood. The base point which will be omitted in the notation for \'etale the fundamental group is the generic geometric point of $C$. This profinite group $\pi_1^{\e}(C)$ is not topologically finitely generated. Hence even though we know (\cite{Ray}, \cite{Har}) all the finite quotients of $\pi_1^{\e}(C)$, they don't determine the group. There have been attempts to understand $\pi_1^{\e}(C)$ in various ways (\cite{Pop-halfRE} and \cite{BK}). For a survey on coverings and the \'etale fundamental group see \cite{HOPS}.

One way to address the problem is to understand which embedding problems for $\pi_1^{\e}(C)$ have solutions (see Section \ref{section1} for definition). An embedding problem $(\beta:\Gamma\longrightarrow G,
\alpha: \pi^{\e}_1(C)\longrightarrow G)$ for $\pi_1^{\e}(C)$ consists of surjective group homomorphisms $\alpha$ and $\beta$ with $\Gamma$ a finite group. Let $H$ be the kernel of $\beta$. It was shown by Pop (\cite[Theorem B]{PF}) and Harbater (\cite[Corollary 4.6]{Har2}) that when $H$ is a quasi-$p$ group (i.e. generated by its Sylow-$p$ subgroups), then the embedding problem has $\card(k)$ many solutions. But when $H$ is prime-to-$p$ the number of solutions is finite. Although it is not clear when a solution exists and how many distinct solutions there are for the embedding problem.  In \cite{HS} and \cite{K} solutions to prime-to-$p$ embedding problems restricted to certain open subgroups of $\pi_1^{\e}(C)$ were investigated. 
When $H\cong (\Z/m\Z)^r$ where $m$ is coprime to $p$ and $r\ge 1$, then $H$ is a $G$-representation over $\Z/m\Z$. We show that the solutions to the embedding problems with kernel $H$ are in bijection with those $G$-subrepresentations of $P_m(U)$ which are isomorphic to $H$ (see Theorem \ref{solutionsEP-submodule} and Corollary \ref{NSExt}). Here the $G$-Galois \'etale cover $U\longrightarrow C$ is determined by $\alpha: \pi^{\e}_1(C)\longrightarrow G$ and $P_m(U)$ is a generalization of the $m$-torsion of the Picard group associated to $U$ (see Definition \ref{GPic} and Section \ref{G-actionPm} for $G$-action on $P_m$). This result has some consequences for ``effective subgroups'' for an embedding problem (Corollary \ref{4.1}).

When the Galois group of the cover $U\longrightarrow C$ associated with the above embedding problem is a cyclic $p$-group, we obtain some more concrete results on when they have solutions and how many distinct solutions exist (see Theorem \ref{5.4}).
In \cite{G}, for a prime number $l$ other than $p$,  $(\ZZ/l\ZZ)^{\oplus n}\rtimes\ZZ/p\ZZ$-Galois covers of $\PP^1_k$ ramified only at infinity with minimal genus were constructed. Here, for a finite abelian group $H$, we describe some conditions (Corollaries \ref{finalcor}, \ref{5.6}) for existence of $H\rtimes\ZZ/p^a\ZZ$-Galois covers of smooth connected projective curves \'etale away from a finite set of points, dominating a given $\ZZ/p^a\ZZ$-Galois cover.

Recall that if the characteristic of $k$ is zero, then $\pi_1^{\e}(C)$ is the free profinite group over $2g_C+r_C-1$ elements, where $g_C$ is the genus of the smooth completion $X$ of $C$ and $r_C$ is the number of closed points in $X\setminus C$. The results described in Sections \ref{section1} and \ref{section3} hold even in characteristic zero case. But perhaps one could also obtain those results directly using group theory.

\section{Galois covers and pullback of coverings} \label{section1}
A morphism $\phi:Y\longrightarrow X$ between smooth curves is called a \emph{cover} if it is finite and generically smooth (separable). If $G$ is a finite group, then a $G$-Galois cover is a cover $Y\longrightarrow X$ together with a homomorphism $G\longrightarrow \Aut(Y/X )$ via which $G$ acts simply transitively on all the generic geometric fibres. The cover $\phi$ is called an \emph{\'etale cover} if it is also an \'etale morphism.
Unless otherwise stated, we will assume that the covers of curves are connected. Let $k$ be an algebraically closed field of characteristic $p>0$. Let $C$ be a smooth irreducible curve over $k$. A finite group $G$ is called quasi-$p$ if $G$ is generated by the $p$-sylow subgroups of $G$ and it is called a prime-to-$p$ group if $p$ is coprime to $|G|$. An \emph{embedding problem} (EP) $\mathcal{E}$ for the \'etale fundamental group $\pi_1^{\e}(C)$ is a pair of epimorphisms $\mathcal{E}=(\beta:\Gamma\longrightarrow G, \alpha:\pi_1^{\e}(C)\longrightarrow G )$ where $\Gamma$ and $G$ are finite groups.
\[
\xymatrix{
&&& \pi_1^{\e}(C)\ar@{.>>}[dl]_? \ar@{->>}[d]^\alpha\\
1\ar[r]& H\ar@{^{(}->}[r] & \Gamma \ar@{->>}[r]_\beta & G\ar[r] \ar[d] & 1\\
&&& 1
}\label{eq:E}\tag{$\mathcal{E}$}
\]
A \emph{proper solution} to $\mathcal{E}$ is an epimorphism from $\pi_1^{\e}(C)$ to $\Gamma$ so that the above diagram commutes.
Here $H=\ker(\beta)$. The EP $\mathcal{E}$ is said to be prime-to-$p$ (resp. quasi-$p$) if $H$ is a prime-to-$p$ (resp. quasi-$p$) group.  
A subset $B\subset H$ will be called a \emph{relative generating set} for $H$ in $\Gamma$ if for every subset $T\subset \Gamma$ such that $H \cup T$ generates $\Gamma$, the subset $B\cup T$ also generates $\Gamma$.
The\emph{ relative rank} of $H$ in $\Gamma$ is the smallest non-negative integer $\mu :=\rank_\Gamma(H)$ such that there is a relative generating set for $H$ in $\Gamma$ consisting of $\mu$ elements.

 Let $\mathcal{E}=(\beta, \alpha)$ be an EP. Let $\phi:X \longrightarrow C$ be the \'etale $G$-Galois cover corresponding to $\alpha$. A solution to $\mathcal{E}$ is an \'etale $H$-Galois cover $Y\longrightarrow X$ such that the composition $Y\longrightarrow C$ is a $\Gamma$-Galois cover. Let $NS(\mathcal{E})$ denote the number of equivalence classes of proper solutions to $\mathcal{E}$. Here we consider two solutions $\gamma_1$ and $\gamma_2$ of $\mathcal{E}$ to be equivalent if $\ker(\gamma_1)=\ker(\gamma_2)$. In other words, $NS(\mathcal E)$ counts the number of distinct $H$-covers of $X$ which become $\Gamma$ covers of $C$.
Note that if $\mathcal{E}$ is a nontrivial quasi-$p$ EP then $NS(\mathcal{E})$ is infinite (\cite[Corollary 4.6]{Har2}). If $\mathcal{E}$ is a prime-to-$p$ EP then $NS(\mathcal{E})$ is a finite number. This is because there are only finitely many $H$-Galois \'etale covers of $X$ when $H$ is a prime-to-$p$ group (\cite[XIII, Corollary 2.12, page 392]{SGA1}). Also note that if $G$ is trivial then $NS(\mathcal E)$ is simply the number of surjective group homomorphism $\pi_1^{\e}(C)\longrightarrow H$ divided by $|\Aut(H)|$.

Let $Z$ be a normal variety over $k$. Let $G$ be a finite group and $\pi:V\longrightarrow Z$ be a $G$-Galois cover of $Z$, \'etale over a non-empty open subset $U$ of $Z$. Let $H$ be a finite group and $\psi:W\longrightarrow V$ be a $H$-Galois cover of $V$, \'etale over $\pi^{-1}(U)$. Let $\sigma \in$ $\Aut(V/Z)=G$. Consider the pullback $W_\sigma$ of $W$:
\[
\xymatrix{
W_\sigma:=V\times_V W \ar[r]^-{\tilde{\sigma}} \ar[d]_{\psi_\sigma}
& W \ar[d]^\psi \\
V \ar[r]^\sigma & V
}
\]
Then $\psi_\sigma:W_\sigma\longrightarrow V$ is also an $H$-Galois cover, \'etale over $\pi^{-1}(U)$ and $\tilde{\sigma}$ is an isomorphism of schemes over $k$.

\begin{proposition}\label{fp}
The $H$-covers $W_\sigma\longrightarrow V$ are isomorphic to $W\longrightarrow V$, $ \forall \sigma \in$ $\Aut(V/Z)$ if and only if the composition $W\xrightarrow{~\psi~} V \xrightarrow{~\pi~} Z$
 is also Galois.
\end{proposition}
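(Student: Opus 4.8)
The plan is to translate everything into the Galois theory of the function-field tower and reduce the statement to the surjectivity of a single restriction map. Since $Z$, $V$, $W$ are normal and integral and the covers are generically separable, passing to function fields is harmless: write $K=k(Z)\subseteq L=k(V)\subseteq M=k(W)$, so that $L/K$ is Galois with $\Gal(L/K)\cong G$ and $M/L$ is Galois with $\Gal(M/L)\cong H$. Fix $\sigma\in\Aut(V/Z)=G$ and let $\tau\in\Gal(L/K)$ be the corresponding field automorphism. First I would identify the $H$-cover $\psi_\sigma:W_\sigma\longrightarrow V$, up to isomorphism over $V$, with the field $M$ equipped with the twisted $L$-algebra structure $i\circ\tau$, where $i:L\hookrightarrow M$ is the inclusion $\psi^*$; call this twisted $L$-algebra $M_\tau$. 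Concretely, since $\sigma$ is an isomorphism the pullback $W_\sigma$ is just $W$ with structure map $\sigma^{-1}\circ\psi$, so on function fields one only reindexes the embedding of $L$ by $\tau$. As $\sigma$ runs over $G$, so does $\tau$.

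Next I would check that $W_\sigma\cong W$ over $V$ if and only if $M_\tau\cong M$ as $L$-algebras, and that the latter holds exactly when there is a field automorphism $\theta\in\Aut(M)$ with $\theta|_L=\tau$. Because $\tau$ fixes $K$, such a $\theta$ automatically fixes $K$, i.e. $\theta\in\Aut(M/K)$ and $\theta$ restricts to $\tau$ on $L$. Here I am reading ``isomorphic as $H$-covers'' as isomorphic over $V$: an isomorphism of connected $H$-Galois covers need only be equivariant up to an automorphism of $H$, and insisting on strict equivariance would already fail for a non-abelian $\Gamma$ such as $S_3$, so the weaker reading is the intended one. Since the restriction map $r:\Aut(M/K)\longrightarrow\Gal(L/K)=G$ is well defined precisely because $L/K$ is normal, the hypothesis that $W_\sigma\cong W$ for every $\sigma$ is equivalent to the surjectivity of $r$.

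Finally I would compare orders. The kernel of $r$ is $\Aut(M/L)=H$, so $|\Aut(M/K)|=|H|\cdot|\im r|$, whereas $[M:K]=[M:L]\,[L:K]=|H|\,|G|$. As $M/K$ is separable, it is Galois if and only if $|\Aut(M/K)|=[M:K]$, which by the previous identity happens if and only if $|\im r|=|G|$, i.e. if and only if $r$ is surjective. Combining this with the previous paragraph gives: $W_\sigma\cong W$ for all $\sigma$ iff $M/K$ is Galois iff the composite cover $W\xrightarrow{\psi}V\xrightarrow{\pi}Z$ is Galois. The main obstacle—really the only delicate point—is the bookkeeping in the first step: pinning down exactly which twisted $L$-structure the pullback $W_\sigma$ carries, and confirming that ``isomorphic as $H$-covers'' is the over-$V$ notion rather than strict $H$-equivariance. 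Once that identification is correct, the rest is the surjectivity-versus-degree count above.
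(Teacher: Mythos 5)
Your argument is correct, and it packages the two directions more symmetrically than the paper does. Both proofs work on the function-field tower $k(Z)\subseteq k(V)\subseteq k(W)$, and your preliminary bookkeeping matches the paper's: the paper likewise identifies $W_\sigma$ via the cartesian square with the normalization of $V$ in a twisted copy of $k(W)$, and its own converse direction only produces an isomorphism of $V$-schemes fixing $k(V)$, so your reading of ``isomorphic as $H$-covers'' as the over-$V$ notion is exactly the one the paper uses (and, as you note, the only one for which the statement can hold when the extension is non-split on automorphisms). Where you diverge is in the engine: the paper argues each direction separately and by hand --- for the forward direction it takes an arbitrary $k(Z)$-embedding $\tau$ of $k(W)$ into $\overline{k(W)}$, restricts it to $\sigma\in\Aut(V/Z)$, and uses the hypothesized isomorphism $W\cong W_\sigma$ to conclude $\tau(k(W))=k(W)$; for the converse it realizes $k(W)$ as a splitting field and checks that $\sigma(f)$ still splits there. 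You instead reduce both directions to a single equivalence, namely the surjectivity of the restriction map $r:\Aut(M/K)\to\Gal(L/K)$, and close the argument with the count $|\Aut(M/K)|=|H|\cdot|\mathrm{im}\,r|$ against $[M:K]=|H||G|$ together with the criterion that a finite separable extension is Galois iff it has the full complement of automorphisms. Your route is shorter and makes the structural content (an exact sequence $1\to H\to\Aut(M/K)\to\mathrm{im}\,r\to 1$) explicit; the paper's route avoids any counting and in the forward direction proves normality directly from embeddings, which is closer in spirit to how the lifts $\Phi_\sigma$ are then used immediately afterwards to define the $G$-action on $H$. Both are complete; just make sure, as you flag yourself, that the well-definedness of $r$ (which needs $L/K$ normal) is stated, since that is the one place the Galois hypothesis on $V\to Z$ enters your argument.
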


\begin{proof}
First we assume that for $\sigma\in$ $\Aut(V/Z)$, $\phi_\sigma:W \longrightarrow W_\sigma$ is an isomorphism of $H$-covers of $V$. We need to show that the field extension $k(W)/k(Z)$ induced by $k(Z)\hookrightarrow k(V)\hookrightarrow k(W)$ is Galois.
Let $\tau:k(W)\longrightarrow \overline{k(W)}$ be any field embedding in the algebraic closure $\overline{k(W)}$ that fixes $k(Z)$.
It is enough to prove that $\tau(k(W))=k(W)$. Since $k(V)/k(Z)$ is Galois $\tau(k(V))=k(V)\subset \tau(k(W))$. Let $\sigma=\tau|_{k(V)}$ and it defines an element of Aut$(V/Z)$. Let $\psi':W'\longrightarrow V$ be the normalization of $V$ in $\tau(k(W))$. Also, $\tau$ induces an isomorphism $W'\longrightarrow W$ and the following diagram is cartesian. 
\[
\xymatrix{
W' \ar[r]^-{\tau} \ar[d]_{\psi'}
& W \ar[d]^\psi \\
V \ar[r]^\sigma & V
}
\]
Hence $W'=W_\sigma$, $\psi'=\psi_\sigma$ and by hypothesis $\phi_{\sigma}:W\longrightarrow W'$ is an isomorphism of $H$-covers of $V$. Hence $k(W)=k(W')=\tau(k(W))$.

Conversely, suppose the composition $W\xrightarrow{\psi} V\xrightarrow{\pi} Z$ is Galois. 
Let $\sigma \in \text{Aut}(V/Z)$ be non-identity and $W_\sigma$ be the corresponding pullback of $W$. We want to define an isomorphism $\phi_{\sigma}:W\longrightarrow W_{\sigma}$ such that $\psi_{\sigma}\circ\phi_{\sigma}=\psi$. Let $k(W)$ be the splitting field of a polynomial $f\in k(V)[x]$. Then $k(W_{\sigma})$ is the splitting field of $\sigma(f)\in k(V)[x]$. Every root of $\sigma(f)$ is $k(Z)$-conjugate of a root of $f$ and since $k(W)/k(Z)$ is Galois, $\sigma(f)$
 splits in $k(W)$. Hence by comparing degrees $k(W)$ is also the splitting field of $\sigma(f)$. Hence there is an isomorphism $\phi_{\sigma}:k(W_{\sigma})\longrightarrow k(W)$ fixing $k(V)$. This induces the isomorphism $\phi_{\sigma}:W\longrightarrow W_{\sigma}$ of $V$-schemes.
\end{proof}

Let $\phi_\sigma:W\longrightarrow W_\sigma$ be an isomorphism of covers over $V$, for all $\sigma\in \Aut(V/Z)$.  Then by the proposition above, the composition $\pi\circ\psi:
\xymatrix@1{
W \ar[r]^\psi & V \ar[r]^\pi & Z 
}$
 is Galois. For $\sigma\in\Aut(V/Z)$, we get the following commuting diagram:
\[\xymatrix{
W\ar[r]^{\phi_\sigma} \ar[d]_\psi \ar@/^1.5pc/[rr]^{\tilde{\sigma}\circ\phi_\sigma} & W_\sigma \ar[r]^{\tilde{\sigma}} \ar[d]^{\psi_\sigma} & W\ar[d]^\psi\\
V\ar@{=}[r] \ar[d]_\pi & V\ar[r]^\sigma \ar[d]^\pi & V\ar[d]^\pi\\
Z\ar@{=}[r] & Z\ar@{=}[r] & Z
}\]
Let $\Phi_\sigma:=\tilde{\sigma}\circ\phi_\sigma :W\longrightarrow W$. Clearly, $\Phi_\sigma$ is an automorphism of $W$ and $\psi\circ\Phi_\sigma=\sigma\circ\psi$, $(\pi\circ\psi)\circ\Phi_\sigma =\pi\circ\psi$.
Hence $\Phi_\sigma \in \Aut(W/Z)$ and it is a lift of $\sigma$. When $\sigma=$ $\id_V\in \Aut(V/Z)$, we choose its lift to be $\id_W$.

\textbf{The $G$-action on $H$: }For all $\sigma \in G=\Aut(V/Z)$ we fix a lift $\Phi_\sigma$ as above. When $H$ is abelian, the right action of $G$ on $H$ is given by (see \cite[Section 6.6]{W}):
\begin{equation}
h\cdot\sigma := \Phi_\sigma^{-1}\circ h\circ\Phi_\sigma, \forall \sigma\in G \text{ and } h\in H \label{eq:ac}
\end{equation} 

\begin{rmk}
 When $H$ is abelian with the $G$-action as above, $H$ becomes a $G$-module. Then the equivalence classes of extensions of $G$ by $H$ are in one-to-one correspondence with the cohomology group $H^2(G; H)$ (\cite[Theorem 6.6.3]{W}). The factor set $[\cdot,\cdot]:G\times G\longrightarrow H$ given by $[\sigma,\tau]:= \Phi_{\sigma\tau}^{-1}\Phi_\sigma\Phi_{\tau}$ is a 2-cocycle and its image in $H^2(G; H)=Z^2(G, H)/B^2(G, H)$ corresponds to the Galois group $\Gamma$ of $W\longrightarrow Z$.
 For general $H$, the map $\alpha:G\longrightarrow \Aut(H)$ defined by $(\alpha(\sigma))(h)=\Phi_\sigma^{-1}\circ h\circ\Phi_\sigma$ induces the group homomorphism $\theta : G\longrightarrow \Aut(H)/\Inn(H)$ defined by $\theta(\sigma):=[\alpha(\sigma)]$. Then $(H,\theta)$ is a ``$G$-kernel with centre $Z(H)$''. This $\theta$ induces a group homomorphism $\theta_0:G\longrightarrow \Aut(Z(H))$. 
 Using the bijective correspondence between the classes of $(G,H)$-equivalent extensions of $G$  with $H^2(G,Z(H))$ (see \cite[Theorem 11.1]{ES} for more details)
 one can similarly find the cohomology class corresponding to $\Gamma$.
 \end{rmk}

Let $C$ be a smooth curve, $\alpha:\pi_1^{\e}(C)\longrightarrow G$ be an epimorphism and $H$ be a finite abelian group. Let $a:G\longrightarrow \Aut(H)$ be a fixed action of $G$ on $H$. 
Let $NSExt(a,\alpha)$ denote the sum of the number of solutions to embedding problems $(\beta,\alpha)$ where $\beta$ runs over all extensions of $G$ by $H$ given by $a$. Since $H^2(G,H)$ classifies all such extensions, we have the following formula. For $e\in H^2(G,H)$, let $\Gamma_e$ denote the extension and $\beta_e:\Gamma_e\longrightarrow G$ denote the epimorphism. Then
$$NSExt(a,\alpha)=\sum_{e\in H^2(G,H)}NS(\beta_e,\alpha).$$

\section{Cyclic covers prime-to-$p$}\label{section3}

Let $Y$ be a smooth projective curve of genus $g$ over an algebraically closed field $k$ of characteristic $p>0$. Let $m>1$ be coprime to $p$. Let $S$ be a finite set in $Y$ with $r$ elements. The following definitions and notations can be found in \cite[Chapter 3]{EV} and \cite[Section 3]{T}. 
We denote by $\Pic(Y)$ the Picard group of $Y$, by $\Div(Y)$ the Cartier divisors of $Y$, by $\Div_\mathbb{Q}(Y)=\Div(Y)\otimes_\mathbb{Z}\mathbb{Q}$,  by $\mathbb{Z}[S]$ the subgroup of divisors whose supports are contained in $S$, which can be identified with the free $\mathbb{Z}$-module with basis $S$.

Let $\Delta=\sum_{i=1}^nq_iD_i \in \Div(Y)\otimes_\mathbb{Z}\mathbb{Q}$, where $q_i\in\mathbb{Q}$, $D_i$ is a prime divisor and $n\in\mathbb{Z}_{\geq 0}$. Then let $[\Delta]$ denote $\sum_{i=1}^n[q_i]D_i\in \Div(Y)$, where $[q_i]=$ the integral part of $q_i$. For $L\in\Pic(Y)$, $m\ge 1$ and an effective Cartier divisor $D$ such that $L^{\otimes m}\cong\mathcal{O}(-D)$ where $\cO$ is the structure sheaf on $Y$. Define $L^{(i,D)}:=L^{\otimes i}\otimes\mathcal{O}([\frac{i}{m} D])$.
\begin{definition}\label{GPic}
We define
\begin{equation}
P_m(Y\setminus S)=\frac{\{([L],D)\in \Pic(Y)\oplus\mathbb{Z}[S]|L^{\otimes m}\cong\mathcal{O}(-D)\}}{\{([\mathcal{O}(-D),mD)|D\in\mathbb{Z}[S]\}} 
\end{equation}
as an $m$-torsion abelian group. 
\end{definition}
When $S=\emptyset$ then this is the $m$-torsion subgroup of the Picard group of $Y$. In general, it is an extension of $P_m(Y)$.
Note that $P_m=P_m(Y\setminus S)\cong H^1_\text{et}(Y\setminus S, \Z/m\Z)\cong(\mathbb{Z}/m)^{\oplus 2g+r-1+b^{(2)}}$ (See \cite[Section 3]{T}). Here $b^{(2)}=1$ if $r=0$, $b^{(2)}=0$ otherwise.
 
For simplicity, we also write elements of $P_m$ as $([L],D)$. Note that these elements represent $(\mathbb{Z}/m\ZZ)$-Galois \'etale (possibly disconnected) covers of $Y\setminus S$. We may assume that $D$ is an effective Cartier divisor with support in $S$ since the multiplicity at each point in $S$ can be chosen from $\{0,1,\ldots,m-1\}$. We fix an isomorphism $L^{\otimes m}\cong\mathcal{O}(-D)$.
The isomorphism allows one to define an $\cO$-algebra structure on $\oplus_{i=0}^{m-1} L^{(i,D)}$ and
 $\Spec_{\cO}(\oplus_{i=0}^{m-1} L^{(i,D)})$ is the $m$-cyclic (possibly disconnected) cover corresponding to (the equivalence class of) $([L],D)$ (See \cite[Chapter 3]{EV} for more details). 
\begin{definition} 
We say a subset $B$ of a finite group is of \emph{type T1} if subgroups generated by the elements in any two disjoint sets of $B$ intersect trivially. We say that the reduced and irreducible covers $X_1\longrightarrow Y, \ldots, X_n\longrightarrow Y$ are \emph{mutually linearly disjoint} if the fibre product $X_1\times_Y X_2\times\ldots\times_Y X_n$ is an integral scheme. 
\end{definition}
\begin{notation}
If $G_1$ is a subgroup of $G$, and if $V\longrightarrow Y$ is a $G_1$-Galois cover, then there is an induced $G$-Galois
cover $\Ind^G_{G_1}(V)\longrightarrow Y$, which consists of a disjoint union of $[G : G_1]$ copies of $V$, indexed by
the cosets of $G_1$ in $G$. More precisely, $\Ind^G_{G_1}(V)=(G\times V)/\sim$ where $(g,v)\sim (gh,h^{-1}v)$ for $g\in G$, $h\in G_1$ and $v\in V$.
\end{notation}
The part (i) of the proposition below is quite standard but we include a proof for the convenience of the readers.
\begin{proposition}\label{C}
For $Y$ and $P_m$ as above:
\begin{enumerate}[label=(\roman*)]
 \item An element of order $m$ in $P_m$ corresponds to a connected $m$-cyclic cover.
 \item Let $B\subset P_m$ be such that every element of $B$ is of order $m$ and for $\lambda\in B$ let $V_{\lambda}\longrightarrow Y$ be the $m$-cyclic cover corresponding to $\lambda$. Then $B$ is of type T1 iff the set of covers $\{V_{\lambda}\longrightarrow Y:\lambda\in B\}$ are mutually linearly disjoint.  
 \item  Let $B$ be as above of type T1. Let $\zeta$ be an element in the subgroup generated by $B$. Each connected component of the cover $V_{\zeta}\longrightarrow Y$ is dominated by the normalization of the fibre product of the covers $V_{\lambda}\longrightarrow Y$ for $\lambda\in B$.
 \item Let $B$ be as above of type T1 and $\mu$ be a primitive $m$-th root of unity in $k$. The subgroup generated by $B$ acts on the normalization of the cover $\times_{\lambda\in B}V_\lambda\longrightarrow Y$ naturally which extends the automorphism defined by $\lambda=([L],D)\in B$ of the cover $\Spec_{\cO}(\oplus_{i=0}^{m-1} L^{(i,D)})=V_{\lambda}\longrightarrow Y$ given by the multiplication of $\mu^i$ on sections of $L^{(i,D)}$.
\end{enumerate}
\end{proposition}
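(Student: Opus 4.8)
The plan is to pass from $P_m$ to characters of the étale fundamental group, dispose of (i)--(iii) by elementary Galois theory together with duality for finite abelian groups, and treat (iv) separately using the explicit Kummer description of the covers. Throughout I use the isomorphism $P_m\cong H^1_{\text{et}}(Y\setminus S,\mathbb{Z}/m)=\Hom(\pi_1^{\e}(Y\setminus S),\mathbb{Z}/m)$ recalled after Definition \ref{GPic}, writing $\chi_\lambda$ for the character of $\lambda=([L],D)$, so that $V_\lambda$ is the $\mathbb{Z}/m$-cover cut out by $\chi_\lambda$ (extended over $S$ by normalization). An element $\lambda$ has order $d\mid m$ exactly when $\chi_\lambda$ has image the order-$d$ subgroup of $\mathbb{Z}/m$; then $V_\lambda$ is induced from a connected $\mathbb{Z}/d$-cover and so breaks into $m/d$ isomorphic components, being connected iff $d=m$. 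This proves (i).

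For (ii) I would use two standard inputs. Since each $V_\lambda\to Y$ is étale over $Y\setminus S$, so is the fibre product; being normal there it is integral iff connected, and its components are the orbits of $\pi_1^{\e}(Y\setminus S)$ acting through $\Phi:=(\chi_\lambda)_{\lambda\in B}\colon\pi_1^{\e}(Y\setminus S)\to(\mathbb{Z}/m)^B$, i.e. the cosets of $\im\Phi$. Hence the $V_\lambda$ are mutually linearly disjoint iff $\Phi$ is surjective. By duality for finite abelian groups $\Phi$ is surjective iff the dual map $(\mathbb{Z}/m)^B\to P_m$, $e_\lambda\mapsto\lambda$, is injective, i.e. iff $\langle B\rangle=\bigoplus_{\lambda\in B}\langle\lambda\rangle$ is an internal direct sum of copies of $\mathbb{Z}/m$. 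It then remains to identify this direct-sum condition with type T1: taking $B_1=\{\lambda_0\}$, $B_2=B\setminus\{\lambda_0\}$ gives the summand condition for each $\lambda_0$, and conversely uniqueness of representations in a direct sum forces $\langle B_1\rangle\cap\langle B_2\rangle=0$ for all disjoint $B_1,B_2$.

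Part (iii) then follows from the Galois correspondence. Writing $\zeta=\sum_{\lambda\in B}a_\lambda\lambda$ with $a_\lambda\in\mathbb{Z}/m$ gives $\chi_\zeta=\sum_\lambda a_\lambda\chi_\lambda$, whence $\ker\chi_\zeta\supseteq\bigcap_\lambda\ker\chi_\lambda=\ker\Phi$. By (ii) the normalization $W$ of $\times_{\lambda\in B}V_\lambda$ is the connected cover attached to $\ker\Phi$, while each connected component of $V_\zeta$ is attached to $\ker\chi_\zeta$; the inclusion of subgroups exhibits each such component as a quotient of $W$, i.e. dominated by it.

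The step I expect to be most delicate is (iv), which asks for an honest group action extending prescribed automorphisms, not merely an abstract isomorphism. By (ii) the fibre product is integral, so $W$ is connected and Galois over $Y$; the first task is to show that $\Aut(W/Y)$ is the full product $\prod_{\lambda\in B}\Aut(V_\lambda/Y)\cong(\mathbb{Z}/m)^B$ of the deck groups of the factors --- this is precisely where mutual linear disjointness is used, guaranteeing that the slot-wise deck transformation of the fibre product descends to an automorphism of $W$ restricting to the given one on $V_\lambda$ and to the identity on the remaining factors. I would then transport this action to $\langle B\rangle$ via the type T1 isomorphism $\langle B\rangle\cong(\mathbb{Z}/m)^B$ of (ii), sending $\lambda$ to the generator of the $\lambda$-th slot. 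The genuine point of care is to match generators correctly: using the chosen primitive root $\mu$, the prescribed automorphism of $V_\lambda=\Spec_{\cO}(\oplus_{i=0}^{m-1}L^{(i,D)})$ is multiplication by $\mu^i$ on $L^{(i,D)}$, a specific generator of $\Aut(V_\lambda/Y)$, and one must verify that this is the generator to which $\lambda$ is sent under the duality-based identification of $P_m$ with characters. Once the generators are fixed compatibly, absorbing the $\mu$-twist inside the identification $P_m\cong\Hom(\pi_1^{\e},\mathbb{Z}/m)$, the claimed extension is a direct check on the graded pieces of the tensor product of algebras defining $W$.
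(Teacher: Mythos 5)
Your proposal is correct, but it takes a genuinely different route from the paper. You translate everything into characters via $P_m\cong H^1_{\text{et}}(Y\setminus S,\Z/m\Z)=\Hom(\pi_1^{\e}(Y\setminus S),\Z/m\Z)$ and then run (i)--(iii) through Pontryagin duality and the Galois correspondence: linear disjointness becomes surjectivity of $\Phi=(\chi_\lambda)_\lambda$, which dualizes to freeness of $\langle B\rangle$ over $\Z/m\Z$, and your reduction of type T1 to the singleton conditions $\langle\lambda_0\rangle\cap\langle B\setminus\{\lambda_0\}\rangle=0$ is the right elementary bridge. The paper instead works directly with the $\cO$-algebras $\oplus_{i}L^{(i,D)}$: it proves (ii) and (iii) by exhibiting the algebra of the cover attached to an element of $\langle B\rangle$ as a subalgebra of the tensor product of the factors, and inducts on $|B|$. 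Your version is shorter, treats (ii) and (iii) uniformly, and isolates the group-theoretic content cleanly; the paper's version is more constructive, and the explicit subalgebra/Kummer-generator bookkeeping it sets up (the $t_i$ with $t_i^m=s_i$ and $h_i^{\#}(t_j)=\mu^{\delta_{ij}}t_j$) is reused later in the proof of Theorem \ref{solutionsEP-submodule}, so it is not wasted effort there. One remark on (iv): the compatibility check you flag as the delicate point is not actually needed for the statement as written. Since (ii) gives $\langle B\rangle\cong(\Z/m\Z)^{B}$ free on $B$, and $\Aut(W/Y)\cong\prod_{\lambda}\Aut(V_\lambda/Y)$ by linear disjointness, you may simply \emph{define} the action by sending each $\lambda$ to its prescribed automorphism $h_\lambda$ (multiplication by $\mu^i$ on $L^{(i,D)}$) in the $\lambda$-th slot; no reconciliation with the duality-based identification of $P_m$ with characters is required, and this is exactly what the paper does. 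The only other point worth a sentence in a write-up is that integrality of the full fibre product over $Y$ (the paper's definition of linear disjointness) reduces to connectedness over $Y\setminus S$, e.g.\ by flatness of the covers; the paper glosses this equally.
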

\begin{proof}

Let $\psi:W\longrightarrow Y$ be a smooth disconnected cover of $Y$, \'etale over $Y\setminus S$ with an $m$-cyclic group action. Then all connected components of $W$ are isomorphic and a connected component $W_1$ of $W$ is an $n$-cyclic cover of $Y$ and $nl=m$ where $l$ is the number of connected components of $W$. Then $\exists([N],F)\in P_n$ corresponding to $W_1\longrightarrow Y$. Then $W$ corresponds to $([N],lF) \in P_m$ and the order of $([N],lF)$ in $P_m$ is $n$.

Conversely, any element of order $n<m$ in $P_m$ is of the form $([L^{\otimes a}],aD)$, for some $([L],D)$ of order $m$ in $P_m$ and $n=\frac{m}{\gcd(a,m)}$. The cover $W$ corresponding to $([L^{\otimes a}],aD)$ is a disjoint union of $\frac{m}{n}$ number of isomorphic $n$-cyclic covers, each given by the normalization of $\Spec_{\cO}(\oplus_{i=0}^{n-1} L^{\otimes ai})$.

For (ii) and (iii) we use induction on $|B|$. Suppose $([L],D)$ and $([M],E)$ are in $B$ and for some $a$ and $b$ let  $([L^{\otimes a}],aD)=([M^{\otimes b}],bE)$ be order $n>1$. Let $W, W',W''$ be the $m$-cyclic covers corresponding to $([L],D), ([M],E), ([L^{\otimes a}],aD)$, respectively. Let $N=L^{\otimes a}$. Then a connected component $V$ of $W''$ corresponds to $([N],a'D)\in P_n$ for some $a'$. Moreover $W$ and $W'$ dominate $V$ (the $\cO$-algebra defining $V$ is a subalgebra of $\cO$-algebra defining $W$ and $W'$). 

Conversely suppose $W_1$ and $W_2$ are covers of $Y$ corresponding to $([L],D)$ and $([M],E)$. If they are not linearly disjoint then they both dominate a connected cover $V\longrightarrow Y$ of degree $n>1$. Let $([N],F)\in P_n$ be the element corresponding to $V$. Then $L^{\otimes a}=N=M^{\otimes b}$ and the cover associated to $([L^{\otimes a}],aD)$ is $\Ind^{\mathbb{Z}/m\Z}_{\frac{m}{n}\mathbb{Z}/m\Z}(V)=\Ind^{\mathbb{Z}/m\Z}_{\mathbb{Z}/n\Z}(V)$. Hence $\{([L],D),([M],E)\}$ is not of type T1.

For (iii), let $W_1$ and $W_2$ be covers of $Y$ corresponding to $([L],D)$ and $([M],E)$ and $W$ be the normalization of $W_1\times_Y W_2$. Then $W$ is the normalization of $\spec(\oplus_{i=0}^{m-1}\oplus_{j=0}^{m-1}L^{(i,D)}\otimes M^{(j,E)})$. One verifies that the $\cO$-algebra associated to the connected component $W_3$ of the cover associated to $([L^{\otimes a}\otimes M^{\otimes b}],aD+bE)$ is a subalgebra of $\oplus_{i=0}^{m-1}\oplus_{j=0}^{m-1}L^{(i,D)}\otimes M^{(j,E)}$. This induces a dominating map from $W\longrightarrow W_3$.

When  $|B|=n+1>2$ the same argument works. Consider the covers $W_1\longrightarrow Y,\ldots ,W_n\longrightarrow Y$ for any $n$ elements $\lambda_1,\ldots, \lambda_n$ of $B$. Then by induction hypothesis, these are mutually linearly disjoint and their normalized fibre product $W$ is a connected cover of $Y$. Let $W_0\longrightarrow Y$ be the cover corresponding to the remaining element $\lambda_0$. By looking at the algebra associated with these covers, if $W\longrightarrow Y$ and $W_0\longrightarrow Y$ are not linearly disjoint then the common cover will correspond to a connected component of the cover associated to $a\lambda_0=a_1\lambda_1+\ldots a_n\lambda_n$ for some $0<a<m$ and some $0\le a_i\le m-1$. Again for (iii), like in the $n=2$ case note that the $\cO$-algebra defining the normalization of $W_0\times_Y W$ contains the $\cO$-algebra defining the cover corresponding to an element of the subgroup generated by $B$.

To prove (iv), we fix $\mu$, a primitive $m$-th root of unity in $k$. For $\lambda=([L],D)\in B$, let $\Spec_{\cO}(\oplus_{i=0}^{m-1} L^{(i,D)})=V_{\lambda}\longrightarrow Y$ be the smooth irreducible cover. The $\cO$-algebra homomorphism defined by $h(l)=\mu^{i}l$ for any local section $l$ in $L^{(i,D)}$ defines an element $h\in \Aut(V_{\lambda}/Y)$ (see \cite[Section 3.9]{EV}). Fix an isomorphism from $<\lambda>$ to $<h>$ by mapping $\lambda$ to $h$ to identify $<\lambda>$ with $\Aut(V_{\lambda}/Y)$. Now if $B=\{\lambda_1,\ldots,\lambda_n\}$, the $m$-cyclic covers $V_{\lambda_1},\ldots,V_{\lambda_n}$ of $Y$ are linearly disjoint. The abelian group $<B>=<\lambda_1>\times\ldots\times<\lambda_n>$ acts component-wise on the normalised fibre product of these covers.
\end{proof}

\subsection{$G$-action on $P_m$}\label{G-actionPm}
Let $X$ be a smooth connected projective curve and $S_X$ be a finite set of closed points of $X$. Let $\psi:V\longrightarrow X$ be a connected $G$-Galois cover for a finite group $G$ and $S_V=\psi^{-1}(S_X)$. Let $r_X=|S_X|$, $r_V=|S_V|$, $g_X$ and $g_V$, the genus of $X$ and $V$ respectively.
For $\sigma\in G$, $D=\sum_{v\in S_V} a_v v\in \mathbb{Z}[S_V]$, $\sigma^*D=\sum_{v\in S_V} a_v \sigma^{-1}v\in \mathbb{Z}[S_V]$ as $\sigma(S_V)=S_V$.
So $G$ acts on $P_m$ on the right by $([L],D)\mapsto ([\sigma^*L], \sigma^*D)$.  It is easy to see that this action preserves the group operation of $P_m$. 
Hence $P_m$ is a right $G$-module. Note that $P_m$ is naturally a $\Z/m\Z$-module with compatible $G$-action.

\begin{rmk}\label{P_mGalAction}
We choose a generating set $A=\{a_1,\ldots,a_N\}$ of type T1 of $P_m(V\setminus S_V)$ such that each $a_i$ is of order $m$. We also fix $\mu$, a primitive $m$-th root of unity in $k$.  Using Proposition \ref{C}(iv), we fix an isomorphism from $P_m$ to the Galois group of the normalization of the cover $\times_{i=1}^N W_i\longrightarrow V$, where $W_i\longrightarrow Y$ is the cover corresponding to $a_i$.
\end{rmk}

\begin{thm}\label{solutionsEP-submodule}
Let $H$ be a free $\Z/m\Z$-submodule of $P_m=P_m(V\setminus S_V)$. Then the following embedding problem
\[
\xymatrix{
&&& \pi_1^{\e}(X\setminus S_X)\ar@{.>>}[dl]_? \ar@{->>}[d]^\alpha\\
1\ar[r]& H\ar@{^{(}->}[r] & \Gamma \ar@{->>}[r]_\beta & G\ar[r] \ar[d] & 1\\
&&& 1
}
\]
has a solution for some $\Gamma$ and $\beta$ if $H$ is a $G$-submodule of $P_m$. Here $\alpha$ corresponds to the $G$-Galois cover $V\longrightarrow X$ \'etale away from $S_V$.
Conversely, given a solution $\gamma$ to the above embedding problem with $H$ a free $\Z/m\Z$-module, then the $H$-Galois \'etale cover of $V\setminus S_V$ induced from $\gamma$ comes from a $G$-stable subgroup of $P_m$ isomorphic to $H$ as $G$-modules.

Moreover, two solutions to the embedding problem lead to the same $G$-submodule of $P_m(V\setminus S_V)$ iff they are equivalent. 
\end{thm}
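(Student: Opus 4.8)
The plan is to set up a dictionary between free rank-$r$ submodules of $P_m$ and connected $H$-Galois covers of $V$ (\'etale away from $S_V$), and then to read off each assertion from Proposition \ref{fp}. Recall $P_m\cong H^1_{\mathrm{et}}(V\setminus S_V,\Z/m\Z)=\Hom(\pi_1^{\e}(V\setminus S_V),\Z/m\Z)$. A free $\Z/m\Z$-submodule $H=\langle\lambda_1,\dots,\lambda_r\rangle$ of rank $r$ is the same datum as a surjection $\rho=(\lambda_1,\dots,\lambda_r)\colon\pi_1^{\e}(V\setminus S_V)\twoheadrightarrow(\Z/m\Z)^r$, hence the same datum as a connected $H$-Galois cover $W\to V$ \'etale away from $S_V$; by Proposition \ref{C}(iv) and Remark \ref{P_mGalAction} the submodule $H$ is canonically identified with $\Aut(W/V)$, the element $\lambda_i=([L_i],D_i)$ corresponding to the automorphism $h_{\lambda_i}$ multiplying sections of $L_i^{(j,D_i)}$ by $\mu^j$. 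A proper solution $\gamma$ is then precisely such a connected $W$ subject to the requirement that $W\to V\to X$ be $\Gamma$-Galois, where $\gamma\colon\pi_1^{\e}(X\setminus S_X)\to\Gamma$ is the monodromy of $W\to X$ and $\rho=\gamma|_{\pi_1^{\e}(V\setminus S_V)}$.

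With this dictionary the solvability is governed by Proposition \ref{fp}. First I would note that for $\sigma\in G=\Aut(V/X)$ the pullback cover $W_\sigma$ corresponds to the submodule $\sigma^*H=\langle\sigma^*\lambda_1,\dots,\sigma^*\lambda_r\rangle$, since pulling $V_\lambda=\Spec_{\cO}(\oplus_j L^{(j,D)})$ back along $\sigma$ gives $\Spec_{\cO}(\oplus_j(\sigma^*L)^{(j,\sigma^*D)})=V_{\sigma^*\lambda}$. Because a cover of $V$ determines and is determined by its submodule, $W_\sigma\cong W$ as $H$-covers if and only if $\sigma^*H=H$; by Proposition \ref{fp} the composite $W\to V\to X$ is therefore Galois iff $H$ is a $G$-submodule of $P_m$. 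The forward implication follows: if $H$ is $G$-stable then $W\to V\to X$ is Galois, and setting $\Gamma=\Aut(W/X)$ with $\beta\colon\Gamma\to G$ the restriction map yields an extension with kernel $\Aut(W/V)=H$ whose monodromy $\gamma$ is a proper solution (surjective since $W$ is connected). Conversely a solution $\gamma$ produces a connected $W$ with $W\to V\to X$ Galois, so its associated submodule $H'$ satisfies $\sigma^*H'=H'$ for all $\sigma$, is a $G$-submodule, is free of rank $\rank_{\Z/m\Z}H$, and is thus abstractly isomorphic to $H$.

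It remains to upgrade this to a $G$-module isomorphism $H'\cong H$, and this is the step I expect to be the crux. The $G$-action on $\ker\beta=H$ is by conjugation through the chosen lifts, $h\cdot\sigma=\Phi_\sigma^{-1}\circ h\circ\Phi_\sigma$ as in \eqref{eq:ac}, while the $G$-action on the submodule is $\lambda\mapsto\sigma^*\lambda$. I would match them via the geometric identity $h_\lambda\cdot\sigma=\Phi_\sigma^{-1}\circ h_\lambda\circ\Phi_\sigma=h_{\sigma^*\lambda}$: since $\Phi_\sigma$ covers $\sigma$, i.e. $\psi\circ\Phi_\sigma=\sigma\circ\psi$, the conjugate $\Phi_\sigma^{-1}h_\lambda\Phi_\sigma$ again lies in $\Aut(W/V)=H$, and evaluating on sections shows it multiplies $(\sigma^*L)^{(j,\sigma^*D)}$ by $\mu^j$, which is exactly $h_{\sigma^*\lambda}$. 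Under the identification $\lambda_i\leftrightarrow h_{\lambda_i}$ this says conjugation by $\Phi_\sigma$ agrees with the $\sigma^*$-action, so the canonical bijection $H'\to\ker\beta$ is $G$-equivariant. Equivalently one can argue on $\pi_1^{\e}$ by choosing a lift $g$ of $\sigma$ and using $\gamma(g)$ as $\Phi_\sigma$ to get $\rho\circ\sigma_*=(\,\cdot\,\sigma)\circ\rho$; keeping the left/right conventions of the two actions consistent is what makes this the delicate point.

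Finally, for the ``moreover'' clause I would use the containment $\ker\gamma\subseteq\pi_1^{\e}(V\setminus S_V)$, which is immediate since $\beta\circ\gamma=\alpha$ forces $\ker\gamma\subseteq\ker\alpha=\pi_1^{\e}(V\setminus S_V)$. Hence $\ker\gamma=\ker\bigl(\gamma|_{\pi_1^{\e}(V\setminus S_V)}\bigr)=\ker\rho$, so the kernel of a solution equals the kernel of the monodromy of the induced cover $W\to V$, which determines and is determined by the associated submodule. Consequently two proper solutions $\gamma_1,\gamma_2$ yield the same submodule iff $\ker\rho_1=\ker\rho_2$ iff $\ker\gamma_1=\ker\gamma_2$, that is, iff they are equivalent, completing the proof.
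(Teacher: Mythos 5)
Your proposal is correct and follows essentially the same route as the paper: both reduce solvability to Proposition \ref{fp} via the correspondence between free $\Z/m\Z$-submodules of $P_m$ and connected $H$-covers of $V$, and both hinge on the identity $\Phi_\sigma^{-1}\circ h_\lambda\circ\Phi_\sigma=h_{\sigma^*\lambda}$ to match the conjugation action on $\ker\beta$ with the $\sigma^*$-action on $P_m$. The only cosmetic differences are that you package the dictionary through monodromy homomorphisms $\pi_1^{\e}(V\setminus S_V)\twoheadrightarrow(\Z/m\Z)^r$ where the paper uses normalized fibre products of type-T1 generating sets together with a degree comparison, and that the paper verifies the crux identity explicitly with Kummer generators $t_i$ satisfying $t_i^m=s_i$ where you verify it on sections of the line bundles.
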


\begin{proof}
  Let $B\subset P_m=P_m(V\setminus S_V)$ be of type T1 consisting of elements of order $m$ and $H=<B>$.
  Suppose $H$ is a $G$-submodule of $P_m$. For $\lambda\in B$, let $W_{\lambda}\longrightarrow V$ be the $m$-cyclic cover corresponding to $\lambda$. Let $W\longrightarrow V$ be the normalized fibre product of these covers. Then by Proposition \ref{C}, $W\longrightarrow V$ is an $H$-Galois connected cover \'etale over $V\setminus S_V$. Since $H$ is a $G$-module for any $\sigma\in G$ and $\lambda\in B$, $\sigma^*\lambda\in H$. Hence $W\longrightarrow V$ dominates the covers $W_{{\sigma}^*\lambda}\longrightarrow V$ corresponding to $\sigma^*\lambda$ for every $\lambda\in B$. Moreover, $\sigma(B)$ consists of elements of order $m$ and is of type T1. Hence by Proposition \ref{C} the normalized fibre product $W^*_{\sigma}$ of the covers $W_{{\sigma}^*\lambda}\longrightarrow V$ is dominated by $W\longrightarrow V$. Comparing degrees we obtain that the covers $W^*_{\sigma}\longrightarrow V$ and $W\longrightarrow V$ are isomorphic. Since this is true for all $\sigma\in G$, by Proposition \ref{fp} $W\longrightarrow X$ is a Galois cover. Let $\Gamma$ be the Galois group. Then $\Gamma$ is an extension of $G$ by $H$.
  
  Since $W\longrightarrow X$ is Galois, for any $\sigma\in G$, we choose a lift $\Phi_\sigma=\tilde{\sigma}\circ\phi_\sigma$ from the Galois group $\Gamma$. Let $h\in\Gal(W/V)$ be the image of $\lambda\in<B>$ (by Proposition \ref{C}) and $h^\sigma$ be the pullback of $h$. Note that $\Phi_\sigma^{-1}$ induces an isomorphism from $W_\lambda$ to $W_{\sigma^*\lambda}$, the $m$-cyclic cover of $V$ corresponding to $\sigma^*\lambda$.
\[
\xymatrix{
W\ar[r]^{\phi_\sigma} \ar[d]^{\sigma^*h} & W_\sigma\ar[r]^{\tilde{\sigma}}\ar[d]^{h^\sigma} & W\ar[d]^h\\
W\ar[r]^{\phi_\sigma} & W_\sigma\ar[r]^{\tilde{\sigma}} & W
}
\]
 From the commuting diagram above, $h$ induces $\sigma^*h$, the image of $\sigma^*\lambda$ in $\Gal(W_{\sigma^*\lambda}/V)$.

 For a morphism $f$ of varieties over $k$, we let $f^{\chash}$ denote the map of rational functions.
 Let $B=\{\lambda_1,\ldots,\lambda_n\}$, $\lambda_i=([L_i],D_i)$, $W_i$ be the corresponding cover and $h_i$ be the image of $\lambda_i$ in $\Gal(W_i/V)$. Then $L_i^m\otimes \mathcal O_V(D_i)=\dv(s_i)$ for some $s_i$ in $k(V)$ and $k(W_i)$ is $k(V)(t_i)$ for some $t_i\in k(W)$ such that $t_i^m=s_i$ and  $h_i^\chash(t_j)=\mu^{\delta_{ij}}t_j$ where $\delta_{ij}=0$ if $i\neq j$, $\delta_{ij}=1$ if $i=j$.
 Note that $k(W_{\sigma^*\lambda_j})=k(V)(\Phi_\sigma^\chash(t_j))$ such that $(\Phi_\sigma^\chash(t_j))^m=\sigma^\chash(s_j)$. Clearly, $k(W)=k(V)(\Phi_\sigma^\chash(t_1),\ldots,\Phi_\sigma^\chash(t_n))$ and $(\sigma^*h_i)^\chash(\Phi_\sigma^\chash(t_j))=\mu^{\delta_{ij}}(\Phi_\sigma^\chash(t_j))$.
 
 Note that
 $$(\Phi_\sigma^{-1} h_i\Phi_\sigma)^\chash(\Phi_\sigma^\chash(t_j))=\Phi_\sigma^\chash h_i^\chash(\Phi_\sigma^\chash)^{-1}(\Phi_\sigma^\chash(t_j))=\mu^{\delta_{ij}}(\Phi_\sigma^\chash(t_j))=(\sigma^*h_i)^\chash(\Phi_\sigma^\chash(t_j)).$$ We have $\Phi_\sigma^{-1}\circ h_i\circ\Phi_\sigma=\sigma^*h_i$.
 So we obtain $h\cdot\sigma=\Phi_\sigma^{-1}\circ h\circ\Phi_\sigma=\sigma^*h$, for any $h\in \Gal(W/V)$. Hence, the group action of $G$ on $H\cong\Gal(W/V)$ is the same as the action of $G$ on $H$ as a $G$-submodule of $P_m$  and $W\longrightarrow X$ provides a solution to the embedding problem.

 For the converse, we choose a basis $B=\{h_1,\ldots,h_r\}$ of the free $\Z/m\Z$-module $H$. The solution $\gamma$ to the EP corresponds to the $\Gamma$-cover $W\to X$ which leads to the $H$-cover $W\longrightarrow V$. Let $W_i\longrightarrow V$ be the $\Z/m\Z$-cover $W/H_i$ where $H_i=<h_j:1\le j \le r, j\ne i>$.
 Note that $W$ is the normalization of fibre product of $W_i$'s. Since $W_i\longrightarrow V$ is a $\ZZ/m\ZZ$-cover, it corresponds to some $([L_i],D_i)$ in $P_m$. So the subgroup of $P_m$ generated by $\{([L_i],D_i) : 1\leq i\leq r\}$ is isomorphic to $H$. Note that this subgroup does not depend on the choice of the basis (by Proposition \ref{C}(iii)). By Proposition \ref{fp}, this subgroup of $P_m$ is $G$-stable under the $G$-action on $P_m$. Moreover, exactly like in the previous paragraph, the $G$-action on this subgroup of $P_m$ is the same as the $G$-action on $H$ induced from the EP. Hence $H$ is isomorphic to this subgroup $<\{([L_i],D_i) : 1\leq i\leq r\}>$ of $P_m$ as $G$-modules.

  Furthermore, for the last part of the theorem, observe that two solutions for the embedding problem are equivalent iff they induce the same $H$-cover $W\longrightarrow V$. Finally, the $H$-cover $W\longrightarrow V$ determines the subgroup of $P_m$ isomorphic to $H$ by the above.
\end{proof}

\begin{cor}\label{NSExt}
 Let $\alpha$ be as above, $H=(\Z/m\Z)^r$ be a $G$-module and $a:G\longrightarrow \Aut(H)$ be the associated action. Then $NSExt(a,\alpha)$ is the number of distinct $G$-submodules of $P_m(V\setminus S_V)$ isomorphic to $H$. 
\end{cor}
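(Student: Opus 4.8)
The plan is to glue together the bijection of Theorem \ref{solutionsEP-submodule}, which concerns a single embedding problem, across all the extensions of $G$ by $H$ realizing the fixed action $a$. Recall that by definition
$$NSExt(a,\alpha)=\sum_{e\in H^2(G,H)}NS(\beta_e,\alpha),$$
so it is enough to exhibit, for each $e$, a bijection between the equivalence classes of proper solutions to $(\beta_e,\alpha)$ and a subfamily of the $G$-submodules of $P_m=P_m(V\setminus S_V)$, in such a way that these subfamilies partition the set $\mathcal{S}$ of all $G$-submodules of $P_m$ isomorphic to $H$ as $G$-modules (i.e. carrying the action $a$).

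First I would attach to each $H'\in\mathcal{S}$ an extension class. Every such $H'$ is free of rank $r$ over $\Z/m\Z$, hence has a $\Z/m\Z$-basis, which is automatically a type T1 set of order-$m$ elements, so the forward direction of Theorem \ref{solutionsEP-submodule} applies and yields a connected $H'$-Galois cover $W\to V$, \'etale away from $S_V$, for which $W\to X$ is Galois. Writing $\Gamma(H'):=\Gal(W/X)$, the theorem identifies the conjugation action of $G$ on $\Gal(W/V)\cong H'$ with the $G$-submodule action on $H'$; under the chosen isomorphism $H'\cong H$ this is the action $a$. Thus $\Gamma(H')$ is an extension of $G$ by $H$ realizing $a$, its class is a well-defined $e(H')\in H^2(G,H)$, and $W\to X$ is a proper solution to $(\beta_{e(H')},\alpha)$.

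Conversely, the converse part of Theorem \ref{solutionsEP-submodule} sends any proper solution $\gamma$ of $(\beta_e,\alpha)$ to the $G$-submodule of $P_m$ determined by the induced $H$-cover $W\to V$, which is isomorphic to $H$ as a $G$-module and hence lies in $\mathcal{S}$. The moreover clause of the theorem says that two solutions to a fixed EP produce the same $G$-submodule exactly when they are equivalent, so for each fixed $e$ these two constructions restrict to a bijection between the equivalence classes of solutions to $(\beta_e,\alpha)$ and the set $\{H'\in\mathcal{S}:e(H')=e\}$. Since the extension class of $\Gal(W/X)$ is determined by the cover $W\to X$, each $H'\in\mathcal{S}$ carries a single value $e(H')$, so these sets partition $\mathcal{S}$. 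Summing over $e$ then gives $|\mathcal{S}|=\sum_{e}NS(\beta_e,\alpha)=NSExt(a,\alpha)$.

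The step I expect to be the main obstacle is verifying that the conjugation action of $G$ on the kernel of $\Gamma(H')$ is exactly the prescribed action $a$, so that $\Gamma(H')$ really is counted by $NSExt(a,\alpha)$ and lands in the correct summand indexed by $e(H')$; this is precisely the action-matching computation inside the proof of Theorem \ref{solutionsEP-submodule} identifying $h\cdot\sigma=\Phi_\sigma^{-1}\circ h\circ\Phi_\sigma$ with $\sigma^*h$. Everything else is bookkeeping: checking well-definedness of $e(H')$, that the two constructions are mutually inverse for fixed $e$, and that the partition of $\mathcal{S}$ by extension class matches the sum defining $NSExt(a,\alpha)$.
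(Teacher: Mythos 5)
Your proposal is correct and follows the same route as the paper: the paper's proof is a one-sentence appeal to Theorem \ref{solutionsEP-submodule}, asserting the bijection between $G$-submodules of $P_m(V\setminus S_V)$ isomorphic to $H$ and equivalence classes of solutions ranging over all extensions of $G$ by $H$. You have simply made explicit the bookkeeping over $H^2(G,H)$ (attaching an extension class to each submodule and partitioning accordingly) that the paper leaves implicit.
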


\begin{proof}
By Theorem \ref{solutionsEP-submodule}, there is a bijection between $G$-submodules $H$ of $P_m=P_m(V\setminus S_V)$ and equivalence classes of solutions to the embedding problems $(\beta,\alpha)$ where $\beta:\Gamma\longrightarrow G$ is an epimorphism and $\Gamma$ is any extension of $G$ by $H$.  
\end{proof}

An immediate consequence of Corollary \ref{NSExt} is the following corollary.
\begin{cor}\label{B}
There exists a unique extension $\Gamma_0$ of $G$ by $P_m$ such that the following embedding problem has a unique equivalence class of solutions.
\[
\xymatrix{
&&& \pi_1^{\e}(X\setminus S_X)\ar@{.>>}[dl]_? \ar@{->>}[d]^\alpha\\
1\ar[r]& P_m\ar@{^{(}->}[r] & \Gamma_0 \ar@{->>}[r] & G\ar[r] \ar[d] & 1\\
&&& 1
}
\]
\end{cor}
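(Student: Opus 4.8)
The plan is to deduce this immediately from Corollary \ref{NSExt} by taking the kernel $H$ to be all of $P_m$. First I would invoke the structure result $P_m\cong(\Z/m\Z)^{\oplus N}$ with $N=2g_V+r_V-1+b^{(2)}$, which shows that $P_m$ itself is a free $\Z/m\Z$-module and is therefore an admissible choice of $H$ in Corollary \ref{NSExt}. Let $a:G\longrightarrow\Aut(P_m)$ be the natural action coming from Section \ref{G-actionPm}, so that the displayed embedding problem has kernel $P_m$ with this action.

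The key observation is that $P_m$ is the \emph{unique} $G$-submodule of $P_m$ that is isomorphic to $P_m$ as a $G$-module. Indeed, any $G$-submodule $M\subseteq P_m$ with $M\cong P_m$ has the same finite cardinality as $P_m$, and a subgroup of a finite group having the same order as the whole group must equal the whole group; hence $M=P_m$. Consequently the count produced by Corollary \ref{NSExt} is exactly $NSExt(a,\alpha)=1$.

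Finally I would expand this via the defining formula $NSExt(a,\alpha)=\sum_{e\in H^2(G,P_m)}NS(\beta_e,\alpha)$. Since each summand $NS(\beta_e,\alpha)$ is a non-negative integer and the total equals $1$, exactly one class $e_0\in H^2(G,P_m)$ satisfies $NS(\beta_{e_0},\alpha)=1$, while $NS(\beta_e,\alpha)=0$ for every $e\neq e_0$. Setting $\Gamma_0:=\Gamma_{e_0}$, this is then the unique extension of $G$ by $P_m$ for which the embedding problem is solvable, and for that extension the solution is unique up to equivalence. I do not anticipate a serious obstacle; the only point requiring a moment's care is the finiteness argument identifying $P_m$ as the unique self-isomorphic $G$-submodule, since this is precisely what forces the count to be exactly $1$ rather than merely finite, and hence pins down both the extension $\Gamma_0$ and the uniqueness of its solution.
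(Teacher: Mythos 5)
Your proposal is correct and follows the paper's own (one-line) proof: take $H=P_m$ in Corollary \ref{NSExt}, note that the only $G$-submodule of $P_m$ isomorphic to $P_m$ is $P_m$ itself by a cardinality count, and unpack $NSExt(a,\alpha)=1$ across the sum over $H^2(G,P_m)$ to isolate the unique extension $\Gamma_0$ with its unique solution class. No gaps; you have simply made explicit the steps the paper leaves implicit.
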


\begin{proof}
 Taking $H=P_m$ in the above corollary, we obtain a unique equivalence class of solutions to the embedding problem. 
\end{proof}

We are interested in finding $G$-submodules $H$ of $P_m$ as these give rise to the solutions to embedding problems for $\alpha:\pi_1^{\e}(X\setminus S_X)\longrightarrow G$ with some extension $\Gamma$ of $G$ by $H$.

Note that $S_V$ is a $G$-set. Let $S$ be a (possibly empty) $G$-stable subset of $S_V$.
\begin{pro}
 The group $P_m(V\setminus S)$ is a $G$-submodule of $P_m(V\setminus S_V)$. In particular, there is an extension $\Gamma$ of $G$ by $P_m(V\setminus S)$ such that the embedding problem $(\beta:\Gamma\longrightarrow G, \alpha)$ has a solution.
\end{pro}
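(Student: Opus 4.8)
The plan is to realize $P_m(V\setminus S)$ as a $G$-submodule of $P_m=P_m(V\setminus S_V)$ via the inclusion $S\subseteq S_V$, and then to feed this into Theorem \ref{solutionsEP-submodule}. Since $S\subseteq S_V$ we have $\Z[S]\subseteq\Z[S_V]$, and keeping the first coordinate fixed this induces a natural map
\[
\iota:P_m(V\setminus S)\longrightarrow P_m(V\setminus S_V),\qquad \iota([L],D)=([L],D),
\]
now viewing $D\in\Z[S_V]$. First I would check that $\iota$ is well defined: the relation subgroup $\{([\mathcal{O}(-D')],mD'):D'\in\Z[S]\}$ of $P_m(V\setminus S)$ lands inside the corresponding relation subgroup of $P_m(V\setminus S_V)$, so the map descends to the quotients. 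Geometrically $\iota$ is just the restriction map $H^1_\text{et}(V\setminus S,\Z/m\Z)\to H^1_\text{et}(V\setminus S_V,\Z/m\Z)$ coming from $V\setminus S_V\subseteq V\setminus S$.

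The one step that requires an actual argument is injectivity, and this is where I expect the only non-formal content to sit. Suppose a class $([L],D)$ with $D\in\Z[S]$ maps to $0$ in $P_m(V\setminus S_V)$; then $L\cong\mathcal{O}(-D')$ and $D=mD'$ for some $D'\in\Z[S_V]$. The key observation is that $\Z[S]$ is \emph{saturated} in $\Z[S_V]$: the quotient $\Z[S_V]/\Z[S]\cong\Z[S_V\setminus S]$ is free, hence torsion free, so $mD'\in\Z[S]$ forces $D'\in\Z[S]$. Consequently $([L],D)=([\mathcal{O}(-D')],mD')$ with $D'\in\Z[S]$ already lies in the relation subgroup defining $P_m(V\setminus S)$, so its class was $0$ there. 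Thus $\iota$ is injective and identifies $P_m(V\setminus S)$ with a subgroup of $P_m$.

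Next I would verify $G$-equivariance and $G$-stability of the image, which are formal. The $G$-action on both groups is given by the same formula $([L],D)\mapsto([\sigma^*L],\sigma^*D)$, so $\iota$ commutes with it by inspection; the only point to confirm is that this action preserves $\Z[S]$, which holds precisely because $S$ is $G$-stable: for $D=\sum_{v}a_v v\in\Z[S]$ we have $\sigma^*D=\sum_v a_v\,\sigma^{-1}v\in\Z[S]$ since $\sigma^{-1}v\in S$. Hence $\iota\bigl(P_m(V\setminus S)\bigr)$ is a $G$-submodule of $P_m$.

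Finally I would invoke Theorem \ref{solutionsEP-submodule} with $H=P_m(V\setminus S)$. By the structural isomorphism $P_m(V\setminus S)\cong(\Z/m\Z)^{\oplus\,2g_V+|S|-1+b^{(2)}}$ recorded after Definition \ref{GPic}, the group $P_m(V\setminus S)$ is a free $\Z/m\Z$-module, and by the previous paragraphs it is a $G$-submodule of $P_m=P_m(V\setminus S_V)$. Theorem \ref{solutionsEP-submodule} then produces, for some extension $\Gamma$ of $G$ by $P_m(V\setminus S)$ and some $\beta:\Gamma\longrightarrow G$, a solution to the embedding problem $(\beta,\alpha)$, which is exactly the assertion. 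The main (and mild) obstacle is the injectivity/saturation step above; everything else is bookkeeping with the definitions and an appeal to the established theorem.
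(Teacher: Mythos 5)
Your proof is correct and follows essentially the same route as the paper: the natural inclusion $P_m(V\setminus S)\hookrightarrow P_m(V\setminus S_V)$, $G$-stability of the image because $S$ is $G$-stable, and an appeal to Theorem \ref{solutionsEP-submodule}. The only difference is that you spell out the injectivity of the inclusion (via saturation of $\Z[S]$ in $\Z[S_V]$) and the freeness of $P_m(V\setminus S)$ as a $\Z/m\Z$-module, both of which the paper leaves implicit.
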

\begin{proof}
 By definition of $P_m$ there is a natural inclusion of $H=P_m(V\setminus S)$ in $P_m(V\setminus S_V)$. Since $S$ is a $G$-set $H$ is stable under the action of $G$ on $P_m(V\setminus S_V)$ defined in \ref{G-actionPm}. Hence $H$ is a $G$-submodule of $P_m(V\setminus S_V)$. The rest follows from Theorem \ref{solutionsEP-submodule}.
\end{proof}

Suppose $G_1\unlhd G$ and let $f_1:X_1\longrightarrow X$ be the normalization of $X$ in $k(V)^{G_1}$. Then the cover $V\longrightarrow X$ factors through $X_1\longrightarrow X$, $\Aut(X_1/X)=G/G_1$ and $\Aut(V/X_1)=G_1$.
\begin{pro}
Let $S_1$ be a subset of $f_1^{-1}(S_X)$ stable under the action of $G/G_1$. There is a natural $G$-equivariant homomorphism $P_m(X_1\setminus S_1)\longrightarrow P_m(V\setminus S_V)$. Moreover if $(|G_1^{ab}|,m)=1$ then this homomorphism is injective.  In particular there is an extension $\Gamma$ of $G$ by $P_m(X_1\setminus S_1)$ such that the embedding problem $(\beta:\Gamma\longrightarrow G, \alpha)$ has a solution.
\end{pro}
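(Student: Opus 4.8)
The plan is to realize the homomorphism as pullback of $\Z/m\Z$-covers along the remaining factor $g\colon V\longrightarrow X_1$ of $V\longrightarrow X_1\longrightarrow X$ (so that $\Aut(V/X_1)=G_1$), to read off $G$-equivariance from the quotient relation $g\circ\sigma=\bar\sigma\circ g$, and to reduce injectivity to a Kummer-theoretic character computation over function fields. For the construction, note that since $S_V=\psi^{-1}(S_X)=g^{-1}(f_1^{-1}(S_X))$ and $S_1\subseteq f_1^{-1}(S_X)$, one has $g^{-1}(S_1)\subseteq S_V$. Hence for $([L],D)\in P_m(X_1\setminus S_1)$ the pullback divisor $g^*D$ is supported on $S_V$ and $(g^*L)^{\otimes m}\cong\mathcal O_V(-g^*D)$, so $([L],D)\mapsto([g^*L],g^*D)$ is well defined; it clearly respects the defining relations of $P_m$ (sending $\mathcal O_{X_1}(-E)\mapsto\mathcal O_V(-g^*E)$ and $mE\mapsto m\,g^*E$), giving a group homomorphism $g^*\colon P_m(X_1\setminus S_1)\longrightarrow P_m(V\setminus S_V)$.

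For equivariance, let $\sigma\in G$ have image $\bar\sigma\in G/G_1=\Aut(X_1/X)$. Because $X_1$ is the quotient $V/G_1$, we have $g\circ\sigma=\bar\sigma\circ g$, whence $\sigma^*g^*=g^*\bar\sigma^*$ on both $\Pic$ and on $\Z[S_1]\longrightarrow\Z[S_V]$. Thus $g^*$ intertwines the right $G$-action on $P_m(X_1\setminus S_1)$ (factoring through $G/G_1$, which preserves $\Z[S_1]$ since $S_1$ is $G/G_1$-stable) with the right $G$-action on $P_m(V\setminus S_V)$ from Section \ref{G-actionPm}.

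The injectivity under $(|G_1^{ab}|,m)=1$ is the heart. I would use the embedding $P_m(U)\cong H^1_{\text{et}}(U,\mu_m)\hookrightarrow k(U)^*/(k(U)^*)^m$, valid for any smooth integral $U$ because a $\mu_m$-torsor (equivalently, a $\Z/m\Z$-cover, the normalization of $U$ in the corresponding extension) that is trivial at the generic point is trivial. Under this embedding $g^*$ is induced by the field inclusion $k(X_1)\hookrightarrow k(V)$. A class in $\ker g^*$ is then represented by some $f\in k(X_1)^*$ with $f=u^m$ for $u\in k(V)^*$. For $\tau\in G_1=\Gal(k(V)/k(X_1))$ one has $\tau(u)^m=\tau(f)=f=u^m$, so $\tau(u)/u\in\mu_m$, and $\tau\mapsto\tau(u)/u$ is a homomorphism $\chi\colon G_1\longrightarrow\mu_m\cong\Z/m\Z$. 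Since $|\im\chi|$ divides both $|G_1^{ab}|$ and $m$, it equals $1$; hence $u\in k(V)^{G_1}=k(X_1)$ and $f\in(k(X_1)^*)^m$, so the class vanishes. Equivalently, $\ker g^*=H^1(G_1,\mu_m)=\Hom(G_1^{ab},\Z/m\Z)=0$, by inflation–restriction for the finite \'etale $G_1$-cover $V\setminus S_V\longrightarrow X_1\setminus f_1^{-1}(S_X)$ together with the (always injective) restriction $P_m(X_1\setminus S_1)\hookrightarrow P_m(X_1\setminus f_1^{-1}(S_X))$.

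Finally, for the ``in particular'' clause: when $g^*$ is injective its image is a free $\Z/m\Z$-submodule of $P_m(V\setminus S_V)$, isomorphic as a $G$-module to $P_m(X_1\setminus S_1)$ by equivariance, so Theorem \ref{solutionsEP-submodule} produces an extension $\Gamma$ of $G$ by this image (hence by $P_m(X_1\setminus S_1)$) together with a solution to $(\beta\colon\Gamma\longrightarrow G,\alpha)$. \textbf{Main obstacle.} The map $V\setminus S_V\longrightarrow X_1\setminus S_1$ fails to be finite \'etale exactly over the points of $f_1^{-1}(S_X)\setminus S_1$, so Hochschild–Serre cannot be applied directly on $X_1\setminus S_1$; this is precisely what forces either the passage to the \'etale open $X_1\setminus f_1^{-1}(S_X)$ (with the auxiliary injectivity of the restriction of $P_m$) or the cleaner function-field argument above, and it is the character computation $\chi\colon G_1\longrightarrow\mu_m$ that genuinely consumes the hypothesis $(|G_1^{ab}|,m)=1$.
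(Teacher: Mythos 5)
Your proposal is correct and follows essentially the same route as the paper: the map is the pullback along $V\longrightarrow X_1$ (with $P_m(X_1\setminus S_1)$ first viewed inside $P_m(X_1\setminus f_1^{-1}(S_X))$, exactly the point you flag as the ``main obstacle''), equivariance comes from the quotient relation, injectivity comes from the coprimality of $|G_1^{ab}|$ and $m$, and the last clause is Theorem \ref{solutionsEP-submodule}. The only cosmetic difference is that the paper phrases injectivity as linear disjointness of the $m$-power-order abelian covers $W\longrightarrow X_1$ from $V\longrightarrow X_1$, which is precisely what your explicit Kummer character $\chi\colon G_1\longrightarrow\mu_m$ proves.
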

\begin{proof}
 By definition of $P_m$ there is a natural inclusion of $P_m(X_1\setminus S_1)$ in $P_m(X_1\setminus f_1^{-1}(S_X))$. Since $S_1$ is a $G/G_1$-set $P_m(X_1\setminus S_1)$ is stable under the action of $G/G_1$ on $P_m(X_1\setminus f_1^{-1}(S_X))$ defined in \ref{G-actionPm}. Hence $P_m(X_1\setminus S_1)$ is a $G$-submodule of $P_m(X_1\setminus f_1^{-1}(S_X))$. Let $h:V\longrightarrow X_1$ be the cover which composed with $X_1\longrightarrow X$ is $V\longrightarrow X$. Then for $([\mathcal{L}],{D})\in P_m(X_1\setminus f_1^{-1}(S_X))$, $([h^*\mathcal{L}], h^*D)$ is in $P_m(V\setminus S_V)$. This defines a $G$-equivariant map $h^*:P_m(X_1\setminus f_1^{-1}(S_X))\longrightarrow P_m(V\setminus S_V)$. Since the cover of $W\longrightarrow X_1$ defined by elements of $P_m(X_1\setminus f_1^{-1}(S_X))$ are Galois with abelian Galois group of order some power of $m$ and $G_1^{ab}$ is of order prime to $m$, $W\longrightarrow X_1$ and $V\longrightarrow X_1$ are linearly disjoint. Hence $h^*$ is injective.
 The remaining statement follows from Theorem \ref{solutionsEP-submodule}.
\end{proof}

\section{Effective subgroups}

Let $X^o$ be a smooth connected affine curve, $X$ be its smooth completion and $S_X=X\setminus X^o$. Let $\alpha:\pi_1^{\e}(X^o)\longrightarrow G$ be an epimorphism. Let $H$ be a finite group.
Let $Y\longrightarrow X$ be the $G$-Galois connected cover corresponding to $\alpha$. 
Given a finite index subgroup $\Pi\subset \pi_1^{\e}(X^o)$, we say an embedding problem $(\beta:\Gamma\onto G, \alpha)$ restricts to $\Pi$ if $\alpha(\Pi) = G$. If the restricted embedding problem has a proper solution then we say $\Pi$ is an \emph{effective subgroup} for $\mathcal{E}$. We use the notation and terminology of Serre (see \cite{Se}) for ramification filtration and upper jump.
The following is an immediate consequence of Theorem \ref{solutionsEP-submodule}.
\begin{cor}\label{4.1}
  Let $Z\longrightarrow X$ be a cover \'etale over $X^o$ such that the normalization $f:V\longrightarrow X$ of the fibre product $Y\times_X Z$ is connected. Also assume that there exists an $H$-cover $W^o$ of $V^o=f^{-1}(X^o)$ whose pullback by all $\sigma \in G$ is again the same $H$-cover $W^o\longrightarrow V^o$. Then there exist an extension $\beta:\Gamma\longrightarrow G$ of $G$ by $H$ such that $\pi_1^{\e}(Z^o)$ is an effective subgroup of $\pi_1^{\e}(X^o)$ for the EP $(\beta, \alpha)$.
\end{cor}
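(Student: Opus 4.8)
The plan is to run the entire argument over the base $Z$ rather than $X$, using Proposition \ref{fp}, and to read off both the group $\Gamma$ and the solution from the resulting Galois tower.

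First I would identify the relevant $G$-Galois cover. Since $V\longrightarrow X$ is the normalization of $Y\times_X Z$ and $Y\longrightarrow X$ is the connected $G$-Galois cover attached to $\alpha$, the $G$-action on $Y$ over $X$ pulls back to $V$ and exhibits $V\longrightarrow Z$ as a $G$-Galois cover with $\Aut(V/Z)=G$, \'etale over $Z^o:=Z\times_X X^o$; here connectedness of $V$ is exactly what guarantees that this pullback cover is connected. Reinterpreting this on fundamental groups, $\pi_1^{\e}(Z^o)$ is an open subgroup of $\pi_1^{\e}(X^o)$ and the restriction of $\alpha$ to it corresponds to $V^o\longrightarrow Z^o$; in particular $\alpha(\pi_1^{\e}(Z^o))=G$, so the embedding problem restricts to $\pi_1^{\e}(Z^o)$.

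Next I would apply Proposition \ref{fp} with base $Z$, open set $Z^o$, the $G$-Galois cover $V\longrightarrow Z$, and the $H$-Galois cover $W\longrightarrow V$ obtained by completing and normalizing $W^o\longrightarrow V^o$. The hypothesis that the pullback $W^o_\sigma$ is isomorphic to $W^o$ as an $H$-cover of $V^o$ for every $\sigma\in G=\Aut(V/Z)$ is precisely the criterion of Proposition \ref{fp}, so the composite $W^o\longrightarrow V^o\longrightarrow Z^o$ is Galois. Setting $\Gamma:=\Aut(W^o/Z^o)$, the natural surjection $\beta:\Gamma\longrightarrow \Aut(V^o/Z^o)=G$ has kernel $\Aut(W^o/V^o)=H$, so $\Gamma$ is an extension of $G$ by $H$ and $\beta$ is the desired epimorphism. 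Since $W^o$ is connected (being an $H$-cover), the connected $\Gamma$-Galois cover $W^o\longrightarrow Z^o$ determines an epimorphism $\gamma:\pi_1^{\e}(Z^o)\longrightarrow\Gamma$; because $W^o\longrightarrow Z^o$ dominates $V^o\longrightarrow Z^o$ compatibly with the quotient $\beta$, we get $\beta\circ\gamma=\alpha|_{\pi_1^{\e}(Z^o)}$, so $\gamma$ is a proper solution of the restricted embedding problem. Hence $\pi_1^{\e}(Z^o)$ is an effective subgroup for $(\beta,\alpha)$.

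The main obstacle is conceptual rather than computational: one must run Proposition \ref{fp} over $Z$ (not over $X$, where $V\longrightarrow X$ need not even be Galois) and verify that $\Aut(V/Z)$ is canonically the same $G$ through which the hypothesis on pullbacks is phrased. Once this identification is in place, the Galois criterion supplies $\Gamma$, $\beta$, and the solution simultaneously, and the only remaining checks — connectedness of $V$ giving surjectivity of the restriction and connectedness of $W^o$ giving an honest epimorphism onto $\Gamma$ — are straightforward.
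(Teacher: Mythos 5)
Your proof is correct. The paper itself gives no written argument for this corollary; it is simply declared to be ``an immediate consequence of Theorem \ref{solutionsEP-submodule}''. Unwinding that claim, the intended route is: translate the pullback-invariant $H$-cover $W^o\longrightarrow V^o$ into a $G$-stable submodule of $P_m(V^o)$ isomorphic to $H$, and then invoke the theorem over the base $Z^o$ to produce $\Gamma$ and the solution. You instead bypass the $P_m$-formalism entirely and argue directly from Proposition \ref{fp}: the pullback-invariance hypothesis is literally the criterion of that proposition (applied over the base $Z$, after checking via connectedness of $V$ that $V\longrightarrow Z$ is $G$-Galois with $\Aut(V/Z)=G$ and that $\alpha(\pi_1^{\e}(Z^o))=G$), so $W^o\longrightarrow Z^o$ is Galois and its Galois group is the required extension $\Gamma$ of $G$ by $H$. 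This is a genuinely cleaner derivation and in fact strictly more general: Theorem \ref{solutionsEP-submodule} only applies when $H$ is a free $\Z/m\Z$-module with $m$ prime to $p$, whereas the section's standing hypothesis is merely that $H$ is a finite group, and your argument via Proposition \ref{fp} covers that general case. (It is also the same engine that drives the first half of the proof of Theorem \ref{solutionsEP-submodule}, so the two routes agree where they overlap.) The only point you leave implicit is that the $H$-cover $W^o\longrightarrow V^o$ must be \'etale for $W^o\longrightarrow Z^o$ to define an epimorphism from $\pi_1^{\e}(Z^o)$; this is clearly the intended reading of the statement and does not affect the argument.
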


Let $H$ be a finite group and $a:G\longrightarrow \Aut(H)$ be an action of a finite group $G$ on $H$. Let $Y^o\longrightarrow \Aff^1$ be an \'etale $G$-Galois cover. Let $Z^o\longrightarrow \Aff^1$ be a $p$-cyclic \'etale cover and $g_Z$ be the genus of the smooth completion $Z$ of $Z^o$. Let $f:V\longrightarrow Z$ be the normalization of the fibre product $Y\times_{\PP^1} Z$ and $V^o=f^{-1}(Z^o)$.

\begin{thm} \label{4.2}
 Let $m$ be prime to $p$, $a:G\longrightarrow \Aut(H)$ be a representation over $\Z/m\Z$ and $r$ be the size of the smallest generating set of $H$ as a $G$-module. If $g_Z\ge r/2$ and all the upper jumps of $Y^o\longrightarrow \Aff^1$ at $\infty$ are different from $1+2g_Z/(p-1)$ then $a:G\longrightarrow \Aut(H)$ is a subrepresentation of $P_m(V^o)$. 
\end{thm}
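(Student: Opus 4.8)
The plan is to exhibit, inside $P_m(V^o)$, a $G$-submodule isomorphic to $H$ with its given $G$-action $a$; by Theorem \ref{solutionsEP-submodule} (and Corollary \ref{NSExt}) this is precisely what it means for $a\colon G\to\Aut(H)$ to be a subrepresentation of $P_m(V^o)$, and it simultaneously produces a solution to the associated embedding problem. Before anything else I would verify that $V$ is connected, so that $V^o\to Z^o$ is a connected \'etale $G$-Galois cover and the $G$-module $P_m(V^o)$ of Section \ref{G-actionPm} is defined. The cover $Z^o\to\Aff^1$ is $p$-cyclic and totally ramified over $\infty$, so by the Artin--Schreier genus formula its unique upper ramification break at $\infty$ equals $1+2g_Z/(p-1)$. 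If $k(Z)$ were contained in $k(Y)$, i.e.\ if $Z$ were a subcover of $Y\to\PP^1$, then by compatibility of the upper numbering with quotients this break would appear among the upper jumps of $Y^o\to\Aff^1$ at $\infty$; the hypothesis forbids this. Hence $k(Y)$ and $k(Z)$ are linearly disjoint over $k(\PP^1)$, so $\Gal(V/Z)=G$ and $V$ is connected, making $V^o\to Z^o$ a $G$-torsor.

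The heart of the argument is to determine the $\Z/m\Z[G]$-module structure of $P_m(V^o)\cong H^1_{\text{et}}(V^o,\Z/m\Z)$. Since $V^o\to Z^o$ is an \'etale $G$-torsor with $Z^o$ affine of genus $g_Z$ and a single point at infinity (so $\chi(Z^o)=1-2g_Z$), an equivariant Euler-characteristic computation governs the multiplicities of the regular representation. In characteristic $p$ this must be carried out via Grothendieck--Ogg--Shafarevich rather than the naive Chevalley--Weil formula: the tame count $\Z/m\Z\oplus(\Z/m\Z[G])^{\oplus(2g_Z-1)}$ is modified by Swan contributions concentrated over $\infty_Z$, and these wild terms themselves contribute further copies of the regular representation. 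The outcome I would aim to establish is that $P_m(V^o)$ contains a free $\Z/m\Z[G]$-summand of rank at least $2g_Z-1$, the complementary part being the trivial representation $\Z/m\Z$. The upper-jump hypothesis re-enters here as well: it ensures that the different of $V\to Z$ over $\infty_Z$ is the expected one, so the ramification does not degenerate and the free part has the claimed rank.

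To finish, I would embed $H$ using the genus bound. Recall $H$ is free over $\Z/m\Z$ and generated by $r$ elements as a $G$-module, so it is a quotient of $(\Z/m\Z[G])^{r}$; since $g_Z\ge r/2$ gives free rank at least $2g_Z-1\ge r-1$ (and the wild contributions supply any remaining copy), projecting the free summand of $P_m(V^o)$ onto $(\Z/m\Z[G])^{r}$ and composing with the given surjection yields a surjection $P_m(V^o)\twoheadrightarrow H$. Now I would invoke self-duality: each summand of $P_m(V^o)$ is self-dual as a $\Z/m\Z[G]$-module (the trivial representation trivially, and $\Z/m\Z[G]$ because it is a symmetric algebra), so $P_m(V^o)$ is self-dual; dualizing the surjection then produces an embedding of $H^{*}$, and a parallel count for $H^{*}$ identifies a $G$-submodule of $P_m(V^o)$ isomorphic to $H$ itself. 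Theorem \ref{solutionsEP-submodule} then gives the conclusion.

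I expect the main obstacle to be the equivariant cohomology computation of the second step: pinning down $P_m(V^o)$ as a $\Z/m\Z[G]$-module in the \emph{wildly} ramified setting, i.e.\ proving that the Swan and different contributions over $\infty_Z$ do not obstruct (and in fact enlarge) the free part, and that the integral structure over $\Z/m\Z[G]$ behaves well even when $\gcd(m,|G|)>1$. A secondary point of care is the module-theoretic bookkeeping in the embedding step, namely matching the minimal number of generators of $H$ (and of $H^{*}$) against the available free rank; this is where the self-duality of $P_m(V^o)$ is essential. The two hypotheses are precisely calibrated for these steps: the condition that no upper jump of $Y^o$ equals $1+2g_Z/(p-1)$ secures connectedness of $V$ and the expected ramification, while $g_Z\ge r/2$ supplies enough copies of the regular representation to contain $H$.
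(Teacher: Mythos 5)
Your overall strategy --- exhibit $H$ with the action $a$ as a $G$-submodule of $P_m(V^o)$ and invoke Theorem \ref{solutionsEP-submodule} --- aims at the right target, and your preliminary observations are sound (connectedness of $V$ via compatibility of the upper-numbering filtration with quotients, and the value $1+2g_Z/(p-1)$ for the break of $Z\to\PP^1$ at $\infty$). But the paper takes an entirely different and much shorter route: it never computes $P_m(V^o)$ as a module. It notes that the two hypotheses $g_Z\ge r/2$ and the upper-jump condition are precisely the hypotheses of \cite[Corollary 17]{K}, which yields a proper solution to the embedding problem $(\beta,\alpha)$ for $\pi_1^{\e}(Z^o)$ for any extension $\Gamma$ of $G$ by $H$ realizing $a$ (using that the relative rank of $H$ in $\Gamma$ is at most $r$), and then applies the \emph{converse} direction of Theorem \ref{solutionsEP-submodule} to conclude that $H$ sits inside $P_m(V^o)$ as a $G$-submodule. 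Your proposal is in effect an attempt to reprove that cited result from scratch by pinning down the $\Z/m\Z[G]$-module structure of $P_m(V^o)$, and that is exactly where it does not close.

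Concretely: (1) the central claim that $P_m(V^o)\cong H^1_{\text{et}}(V^o,\Z/m\Z)$ contains a free $\Z/m\Z[G]$-summand of rank at least $2g_Z-1$ (enlarged by ``wild contributions'') is asserted, not proved. Grothendieck--Ogg--Shafarevich computes the $\Z/m\Z$-rank, not the $G$-module structure; equivariant refinements give at best a Brauer character, which does not determine the module when $\gcd(m,|G|)>1$ --- and here $G$ is an arbitrary quasi-$p$ group, so $\Z/m\Z[G]$ is in general far from semisimple. Determining such Galois module structures in the wildly ramified setting is a hard problem in its own right, arguably harder than the theorem being proved. (2) Even granting the free summand, your count gives $2g_Z-1\ge r-1$ copies of the regular representation, one short of the $r$ needed to surject onto $H$; the missing copy is attributed to unspecified wild terms. (3) The self-duality step produces an embedding of $H^{*}$, not of $H$; repairing this requires bounding the number of $G$-module generators of $H^{*}$ by $r$, which is not automatic over a non-semisimple ring. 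Each of these is a genuine gap, and the first is fatal to the argument as written.
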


\begin{proof}
 Note that the upper jump of $Z\longrightarrow \PP^1$ over $\infty$ is $1+2g_Z/(p-1)$ (see \cite[Remark 18]{K}). Also $f:V\longrightarrow Z$ is a connected $G$-Galois cover \'etale over $Z^o$. Let $\alpha:\pi_1^{\e}(Z^o)\longrightarrow G$ be corresponding epimorphism and $\beta:\Gamma \longrightarrow G$ be any extension of $G$ by $H$ with respect to the action $a$. Note that the relative rank of $H$ in $\Gamma$ is at most $r$. By \cite[Corollary 17]{K} the EP $(\beta,\alpha)$ has a proper solution. Hence by Theorem \ref{solutionsEP-submodule}, $H$ is a $G$-submodule of $P_m(V^o)$.
\end{proof}

\begin{example}
Let $Y^o\longrightarrow \Aff^1$ be a $G$-Galois \'etale cover and $Z^o\longrightarrow\mathbb{A}^1_k$ be a $\mathbb{Z}/p\mathbb{Z}$-Galois \'etale cover such that $g_Y>g_Z|G|$ ($g_Z$ is genus of $Z\supset Z^o$, $g_Y$ is genus of $Y\supset Y^o$) and $k(Z)\cap k(Y)=k(x)$. Let $g_V$ be genus of $V$, the normalization of $Y\times_{\PP^1} Z$ and $S$ be the branch locus of $V\longrightarrow Z$. Here $m$ is prime to $p$ and the $G$-action on $P_m=P_m(V^o)$ is the same as in \ref{G-actionPm}. Note that $g_V>g_Z|G|$ and hence $|P_m|>m^{2g_Z|G|}$.
Let $B\subset P_m$ be of cardinality at most $2g_Z$. Let $\Gamma$ be the semidirect product of $P_m$ and $G$. Then the $G$-submodule of $P_m$ generated by $B$ has cardinality at most $m^{2g_Z|G|}$ and hence is not the whole of $P_m$. This implies $B$ is not a relative generating set of $H$ in $\Gamma$. By Theorem \ref{solutionsEP-submodule} $\pi_1^{\e}(Z^o)$ is an effective subgroup for the EP $(\Gamma\longrightarrow G, \pi_1^{\e}(\Aff^1)\longrightarrow G)$. Note that if $H^2(G,P_m)=0$ then we are in the setup of \cite[Proposition 15]{K} and this example shows that the sufficient condition mentioned in \cite[Proposition 15]{K} is far from necessary.
\end{example}

\section{The case when $G$ is a cyclic $p$-group} \label{5}

Let $X$ be a smooth connected projective curve and $S_X$ be a finite set of closed points of $X$. Let $\psi:V\longrightarrow X$ be a connected $G$-Galois cover \'etale away from $S_X$ for a finite cyclic $p$-group group $G$, $S_V=\psi^{-1}(S_X)$ and $\alpha:\pi_1^{\e}(X\setminus S_X)\twoheadrightarrow G$ be the homomorphism corresponding to $\psi$. Let $l$ be a prime number other than $p$. For $b\ge 1$, $d_b$ will denote the order of $l$ in $(\mathbb{Z}/p^b\mathbb{Z})^*$.

\begin{lm}\label{Rep}
Every nontrivial irreducible $\mathbb{F}_l$-representation of $\mathbb{Z}/p^a\mathbb{Z}$ is of dimension $d_b$ for some $b\leq a$. 
\end{lm}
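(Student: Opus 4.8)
The plan is to classify the irreducible $\F_l$-representations of the cyclic group $G=\Z/p^a\Z$ by exploiting the fact that $G$ is abelian, so that over a splitting field the irreducible representations are one-dimensional characters, and then to group Galois-conjugate characters together to obtain the irreducibles over the smaller field $\F_l$. First I would fix a field $\F_{l^N}$ large enough to contain all $p^a$-th roots of unity, i.e.\ take $N$ so that $p^a \mid l^N-1$; over such a field every irreducible representation of the abelian group $G$ is a character $\chi:G\longrightarrow \F_{l^N}^\times$. Since $G$ is cyclic of order $p^a$, a character is determined by the image of a generator, which must be a $p^a$-th root of unity in $\overline{\F_l}$, and the order of that root of unity is $p^b$ for some $b\le a$.

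Next I would use the standard fact that irreducible $\F_l$-representations correspond to $\Gal(\overline{\F_l}/\F_l)$-orbits of the absolutely irreducible characters, the Galois action being generated by Frobenius $x\mapsto x^l$. A character $\chi$ sending a generator of $G$ to a primitive $p^b$-th root of unity $\zeta$ has its Galois orbit given by $\zeta,\zeta^l,\zeta^{l^2},\ldots$, and the size of this orbit is exactly the multiplicative order of $l$ modulo $p^b$, which is $d_b$ by definition. Summing a Galois orbit of absolutely irreducible one-dimensional representations produces an $\F_l$-irreducible representation whose dimension equals the orbit size; hence every nontrivial irreducible $\F_l$-representation arising from a character of order $p^b$ has dimension $d_b$.

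To finish I would argue that these are precisely \emph{all} the nontrivial irreducibles: every irreducible $\F_l[G]$-module, after base change to $\overline{\F_l}$, decomposes into absolutely irreducible characters forming a single Galois orbit (this is where semisimplicity of $\F_l[G]$ enters, guaranteed since $l\nmid |G|=p^a$), and that orbit consists of characters of a common order $p^b$. Thus the dimension over $\F_l$ of the irreducible is $d_b$ for that $b$, and nontriviality forces $b\ge 1$. Conversely, for each $b\le a$ there is at least one character of exact order $p^b$, so each such $d_b$ does occur.

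The main obstacle, though a routine one, is justifying cleanly the equivalence between $\F_l$-irreducibles and Frobenius-orbits of absolutely irreducible characters, and in particular that each Galois orbit yields a single $\F_l$-irreducible of dimension equal to the orbit size rather than splitting further. This rests on Galois descent for modules over the semisimple algebra $\F_l[G]$ together with the computation that the endomorphism field of the descended module is $\F_l(\zeta)=\F_{l^{d_b}}$, so that the $\F_l$-dimension is indeed the orbit length $d_b$. I would invoke the standard theory of splitting fields for group algebras (e.g.\ via the Brauer character / decomposition of the group algebra $\F_l[G]\cong\prod_b \F_{l^{d_b}}$-factors) to make this precise without a long computation.
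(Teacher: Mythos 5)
Your argument is correct, but it takes a genuinely different route from the paper's. You pass to a splitting field $\F_{l^N}$, identify the absolutely irreducible representations of the abelian group with characters valued in $p^a$-th roots of unity, and then descend by grouping Frobenius orbits, using semisimplicity of $\F_l[G]$ (valid since $l\nmid p^a$) and the computation that the endomorphism field of the descended simple module is $\F_l(\zeta)=\F_{l^{d_b}}$. The paper instead never leaves $\F_l$: it views a representation as an $\F_l[x]$-module with $x$ acting as a generator $\sigma$, notes that the minimal polynomial of $\sigma$ divides $x^{p^a}-1=(x-1)\prod_{b=1}^{a}Q_{p^b}(x)$, cites the standard factorization of the cyclotomic polynomial $Q_{p^b}$ over $\F_l$ into irreducible factors $P_{bi}$ all of degree $d_b$ (Lidl--Niederreiter, Theorem 2.47), and concludes from the structure theorem for modules over the PID $\F_l[x]$ that a nontrivial simple module is $\F_l[x]/(P_{bi}(x))$, of dimension $d_b$. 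The two arguments rest on the same arithmetic fact --- the Frobenius orbit structure on primitive $p^b$-th roots of unity is exactly what governs the degrees of the irreducible factors of $Q_{p^b}$ over $\F_l$ --- but the paper's version is more elementary and self-contained, avoiding splitting fields and Galois descent entirely, while yours generalizes more readily to nonabelian situations and makes the character-theoretic meaning of $d_b$ transparent. The one point you rightly flag as needing care, that a Galois orbit yields a single $\F_l$-irreducible of dimension equal to the orbit length rather than splitting or acquiring a Schur index, is handled correctly by your observation that $\F_l[G]$ is a product of fields (Wedderburn guarantees no nontrivial division algebras arise over a finite field), which is in effect the same ring decomposition $\F_l[x]/(x^{p^a}-1)\cong\F_l\times\prod_{b,i}\F_l[x]/(P_{bi}(x))$ that drives the paper's proof.
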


\begin{proof}
Let $G$ be the cyclic group of order $p^a$ and $\sigma$ be its generator. Clearly $\sigma^{p^a}=1_G$ implies that the minimal polynomial of $\sigma$ divides $x^{p^a}-1$. Now, $$x^{p^a}-1=(x-1)\prod_{b=1}^a Q_{p^b}(x)=(x-1)\prod_{b=1}^a\prod_{i=1}^\frac{p^{b-1}(p-1)}{d_b}P_{bi}(x),$$ where $Q_{p^b}(x)$ is the monic irreducible polynomial of a primitive $p^b$-th root of unity (in $\mathbb{C}$) over $\QQ$, $P_{bi}(x)$ are irreducible factors (over $\mathbb{F}_l$) of $Q_{p^b}(x)$ (\cite{LN}, Theorem 2.47) of degree $d_b$. Let $M$ be a nontrivial irreducible $G$-representation. Then $M$ is a simple $\FF_l[x]$-module where multiplication by $x$ is the nontrivial action by $\sigma$. Hence $M\cong \FF_l[x]/P_{bi}(x)$ for some $i\in\{1,\ldots,\frac{p^{b-1}(p-1)}{d_b}\}, b\in\{1,\ldots,a\}$, by structure theorem for modules over PID. Since $M$ is a nontrivial representation
$\FF_l[x]/(x-1)$ is ruled out. Hence the dimension of $M$ is $d_b=\deg(P_{bi})$.
\end{proof}

\begin{note}
Every nontrivial irreducible $\mathbb{F}_l$-representation of $\mathbb{Z}/p^a\mathbb{Z}$ is isomorphic to $\FF_l[x]/P_{bi}(x)$ for some $1\leq b\leq a $ and $1\leq i\leq p^{b-1}(p-1)/d_b$.
\end{note}

\begin{thm}
Let $G$ be a cyclic group of order $p^a$, $\sigma$ be a generator of $G$, $l$ be a prime number other than $p$, $H=(\mathbb{Z}/l\Z)^{n}$, $n_0$ (resp. $n_b$, $1\le b\le a$) be the dimension of $P_l(V\setminus S_V)^G$ (resp. $ \ker(Q_{p^b}(\sigma))\subset P_l(V\setminus S_V)$) over $\FF_l$. Then $n$ can be expressed as $n=u+\Sigma_{b=1}^av_bd_b$ for non-negative integers $u\le n_0$, $v_b\le n_b/d_b,\forall b\le a$ if and only if the embedding problem  $(\beta:H\rtimes_\theta G\twoheadrightarrow G, \alpha:\pi_1^{\e}(X\setminus S_X)\twoheadrightarrow G)$ has a proper solution for some group homomorphism $\theta:G\longrightarrow \Aut(H)$.
\end{thm}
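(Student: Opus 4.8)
The plan is to reduce the statement to a representation-theoretic count of the possible $\FF_l$-dimensions of $\FF_l[G]$-submodules of $P_l := P_l(V\setminus S_V)$, via Theorem \ref{solutionsEP-submodule}.

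First I would observe that since $|H|=l^n$ and $|G|=p^a$ are coprime, $H^2(G,H)=0$, so $H\rtimes_\theta G$ is (up to equivalence) the unique extension of $G$ by $H$ realizing a given action $\theta$. Hence by Theorem \ref{solutionsEP-submodule} (and Corollary \ref{NSExt}), the embedding problem $(\beta:H\rtimes_\theta G\twoheadrightarrow G,\alpha)$ has a proper solution for some $\theta$ if and only if $H=(\ZZ/l\ZZ)^n$, equipped with some $G$-module structure, is isomorphic to a $G$-submodule of $P_l$. Note that $P_l$ is an $\FF_l$-vector space, so every submodule is automatically a free $\ZZ/l\ZZ$-module, as required by that theorem. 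It then remains to determine exactly which $n$ arise as $\FF_l$-dimensions of $\FF_l[G]$-submodules of $P_l$.

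Next, using $l\ne p$, I would invoke Maschke's theorem to write $P_l$ as a direct sum of irreducible $\FF_l[G]$-modules. By Lemma \ref{Rep} and the accompanying Note, these are the trivial representation, occurring with some multiplicity $c_0$, together with the modules $\FF_l[x]/(P_{bi}(x))$ of dimension $d_b$, occurring with multiplicities $c_{bi}$, for $1\le b\le a$. Since any submodule of a semisimple module is again semisimple and is a sub-sum of isotypic components, $P_l$ admits an $\FF_l[G]$-submodule of dimension $n$ exactly when $n=u+\sum_{b,i}w_{bi}d_b$ for integers $0\le u\le c_0$ and $0\le w_{bi}\le c_{bi}$.

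Finally I would identify the multiplicities with the stated invariants. Since $Q_{p^b}(1)=p\ne 0$ in $\FF_l$, the value $1$ is a root of no $P_{bi}$, so no nontrivial irreducible has a nonzero $G$-fixed vector; thus $P_l^G$ is precisely the trivial isotypic component and $c_0=n_0$. Since the cyclotomic factors $Q_{p^b}$ are pairwise coprime over $\FF_l$ (as $x^{p^a}-1$ is separable there), $Q_{p^b}(\sigma)$ annihilates exactly the components $\FF_l[x]/(P_{bi})$ and is invertible on every other isotypic summand; hence $\ker(Q_{p^b}(\sigma))$ is the sum of those components and $\sum_i c_{bi}=n_b/d_b$. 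Writing $v_b:=\sum_i w_{bi}$ and noting that any $0\le v_b\le n_b/d_b$ can be distributed among the $w_{bi}$ subject to $0\le w_{bi}\le c_{bi}$, the condition on $n$ becomes exactly $n=u+\sum_{b=1}^a v_b d_b$ with $0\le u\le n_0$ and $0\le v_b\le n_b/d_b$, as claimed. I expect the main obstacle to be precisely this last identification of $n_0$ and $n_b$ with the isotypic multiplicities, which rests on the elementary facts about $Q_{p^b}$ over $\FF_l$ used above; the rest is the translation through Theorem \ref{solutionsEP-submodule} and routine semisimple bookkeeping.
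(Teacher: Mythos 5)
Your proposal is correct and follows essentially the same route as the paper: reduce via Theorem \ref{solutionsEP-submodule} (using coprimality of $|H|$ and $|G|$ to see that all extensions are semidirect products), decompose $P_l(V\setminus S_V)$ into irreducibles classified by Lemma \ref{Rep}, and match multiplicities against $n_0$ and $n_b/d_b$. Your treatment is in fact slightly more explicit than the paper's on why $P_l^G$ is the trivial isotypic component and why $\ker(Q_{p^b}(\sigma))$ has dimension $\sum_i c_{bi}d_b$, which the paper asserts without comment.
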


\begin{proof}
By Lemma \ref{Rep}, $P_l(V\setminus S_V)\cong (\FF_l)^{n_0}\oplus(\oplus_{b=1}^a \oplus_{i=1}^{p^{b-1}(p-1)/d_b}(\FF_l[x]/P_{bi}(x))^{\gamma_{bi}})$ as $\FF_l[G]$-modules, where the non-negative integers $\gamma_{bi}$ satisfy $\Sigma_{i=1}^{p^{b-1}(p-1)/d_b} \gamma_{bi}d_b=n_b$.\\ 
Since $\gcd(|H|,|G|)=1$, any extension of $G$ by $H$ with respect to some $G$-action $\theta$ must be equivalent to $H\rtimes_\theta G$. Note that any basis of $H$ over $\FF_l$ is of type T1 consisting of elements of order $l$.

If $n$ can be expressed as above, then $H$ is identified with a $G$-stable subspace $(\FF_l)^u\oplus(\oplus_{b=1}^a \oplus_{i=1}^{p^{b-1}(p-1)/d_b}(\FF_l[x]/P_{bi}(x))^{\gamma'_{bi}})$ of $P_l(V\setminus S_V)$ where $\Sigma_{i=1}^{p^{b-1}(p-1)/d_b} \gamma'_{bi}=v_b$ and the embedding problem $(\beta:H\rtimes_\theta G\twoheadrightarrow G, \alpha:\pi_1^{\e}(X\setminus S_X)\twoheadrightarrow G)$ has a proper solution by Theorem \ref{solutionsEP-submodule} where $\theta$ depends on the choice of identification of $H$.

Conversely, if an embedding problem $(\beta:H\rtimes_\theta G\twoheadrightarrow G, \alpha:\pi_1^{\e}(X\setminus S_X)\twoheadrightarrow G)$ has a proper solution, then $H=\ker(\beta)$ must be a $\FF_l[G]$-submodule of $P_l(V\setminus S_V)$ (Theorem \ref{solutionsEP-submodule}). So  $H\cong (\FF_l)^u\oplus(\oplus_{b=1}^a \oplus_{i=1}^{p^{b-1}(p-1)/d_b}(\FF_l[x]/P_{bi}(x))^{\gamma'_{bi}})$  where $0\le u\le n_0 $, $0\le \gamma'_{bi}\le\gamma_{bi}$. Put $v_b=\Sigma_{i=1}^{p^{b-1}(p-1)/d_b} \gamma'_{bi}$. Then $v_b\le n_b/d_b$ and $n=u+\Sigma_{b=1}^a\Sigma_{i=1}^{p^{b-1}(p-1)/d_b}\gamma'_{bi}d_b =u+\Sigma_{b=1}^av_bd_b$.   
\end{proof}

We can replace $l$ by any square-free integer in the above corollary.

\begin{co} \label{finalcor}
Let $G$ be a cyclic group of order $p^a$, $\sigma$ be a generator of $G$, $m$ be a square free integer prime to $p$, $m=l_1\ldots l_T$, for distinct prime numbers $l_\tau$; $d_{b\tau}$ be the order of $l_\tau$ in $(\ZZ/p^b\ZZ)^*$, $H=(\mathbb{Z}/m\Z)^{n}$, $n_{0\tau}$ (resp. $n_{b\tau}$, $1\le b\le a$) be the dimension of $P_{l_\tau}(V\setminus S_V)^G$ (resp. $ \ker_\tau(Q_{p^b}(\sigma))\subset P_{l_\tau}(V\setminus S_V)$) over $\FF_{l_\tau}$. Then for each $\tau\le T$, $n$ can be expressed as $n=u_\tau+\Sigma_{b=1}^av_{b\tau}d_{b\tau}$ for non-negative integers $u_\tau\le n_{0\tau}$, $v_{b\tau}\le n_{b\tau}/d_{b\tau},\forall b\le a$ if and only if the embedding problem  $(\beta:H\rtimes_\theta G\twoheadrightarrow G, \alpha:\pi_1^{\e}(X\setminus S_X)\twoheadrightarrow G)$ has a proper solution for some group homomorphism $\theta:G\longrightarrow \Aut(H)$.
\end{co}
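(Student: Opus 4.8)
The plan is to reduce Corollary \ref{finalcor} to the single-prime case (the preceding unnamed theorem) by exploiting the Chinese Remainder Theorem decomposition of $\mathbb{Z}/m\mathbb{Z}$-modules. Since $m = l_1 \cdots l_T$ is squarefree and prime to $p$, any $\mathbb{Z}/m\mathbb{Z}$-module $M$ canonically splits as $M = \bigoplus_{\tau=1}^T M[l_\tau]$, where $M[l_\tau]$ is the $l_\tau$-primary component, and this splitting is compatible with any $G$-action since $G$ acts $\mathbb{Z}/m\mathbb{Z}$-linearly. In particular, applying this to $P_m = P_m(V\setminus S_V)$ and using the identification $P_{l_\tau}(V\setminus S_V) \cong P_m(V\setminus S_V)[l_\tau]$ as $\mathbb{F}_{l_\tau}[G]$-modules, I would establish a $G$-equivariant isomorphism $P_m(V\setminus S_V) \cong \bigoplus_{\tau=1}^T P_{l_\tau}(V\setminus S_V)$.

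First I would record that $H = (\mathbb{Z}/m\mathbb{Z})^n$ similarly decomposes as $\bigoplus_\tau (\mathbb{Z}/l_\tau\mathbb{Z})^n$, and that a $G$-action $\theta$ on $H$ is equivalent to the data of a compatible family of $\mathbb{F}_{l_\tau}[G]$-module structures on each component. Next, by Theorem \ref{solutionsEP-submodule} (together with Corollary \ref{NSExt}), the embedding problem $(\beta:H\rtimes_\theta G\twoheadrightarrow G, \alpha)$ has a proper solution for some $\theta$ if and only if $H$ embeds as a $G$-submodule of $P_m(V\setminus S_V)$. Under the CRT decomposition, a $G$-submodule of $\bigoplus_\tau P_{l_\tau}$ isomorphic to $\bigoplus_\tau (\mathbb{Z}/l_\tau\mathbb{Z})^n$ exists if and only if, for each $\tau$ separately, there is a $G$-submodule of $P_{l_\tau}(V\setminus S_V)$ isomorphic to $(\mathbb{Z}/l_\tau\mathbb{Z})^n$ as an $\mathbb{F}_{l_\tau}[G]$-module. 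Indeed, since the primary components are orthogonal under any $G$-map, submodules, their isomorphism types, and the $G$-action all decompose over $\tau$ independently.

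The heart of the argument is then to invoke the preceding (single-prime) theorem once for each index $\tau$: applying it with $l = l_\tau$ shows that a $G$-submodule of $P_{l_\tau}(V\setminus S_V)$ isomorphic to $(\mathbb{Z}/l_\tau\mathbb{Z})^n$ exists precisely when $n$ admits an expression $n = u_\tau + \sum_{b=1}^a v_{b\tau} d_{b\tau}$ with $u_\tau \le n_{0\tau}$ and $v_{b\tau} \le n_{b\tau}/d_{b\tau}$. Conjoining these $T$ conditions yields the stated biconditional. One subtlety to verify carefully is that the existence of a \emph{single} $\theta$ realizing all components simultaneously is automatic: given the component submodules $H_\tau \subset P_{l_\tau}$, their direct sum $\bigoplus_\tau H_\tau$ is a $G$-submodule of $P_m$, and the induced $G$-action on it defines the required $\theta$ on $H$; conversely any $\theta$ restricts to each primary component.

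I expect the main obstacle to be purely bookkeeping rather than conceptual: one must check that the $G$-module structure on $P_m$ genuinely respects the $l_\tau$-primary decomposition (which follows since $G$ acts by $\mathbb{Z}/m\mathbb{Z}$-linear, hence primary-component-preserving, automorphisms, as noted after the definition of the $G$-action in Section \ref{G-actionPm}), and that ``free $\mathbb{Z}/m\mathbb{Z}$-submodule of rank $n$'' corresponds correctly under CRT to ``rank-$n$ free $\mathbb{Z}/l_\tau\mathbb{Z}$-submodule in each factor.'' Since $m$ is squarefree, $(\mathbb{Z}/m\mathbb{Z})^n$ is free of rank $n$ and each $l_\tau$-component is simply $(\mathbb{Z}/l_\tau\mathbb{Z})^n$, so no torsion complications arise and the reduction goes through cleanly.
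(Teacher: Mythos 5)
Your proposal is correct and follows essentially the same route as the paper: decompose $P_m(V\setminus S_V)$ and $H$ into their $G$-stable $l_\tau$-primary components via CRT and apply the preceding single-prime theorem to each component. The paper's own proof is just this reduction stated in two sentences; your additional checks (compatibility of the $G$-action with the primary decomposition, assembling a single $\theta$ from the components) are the right details to verify and do not change the argument.
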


\begin{proof}
The abelian groups $P_m(V\setminus S_V), H$ have  unique $G$-stable decompositions: $P_m(V\setminus S_V)=\oplus_{\tau=1}^T P_{l_\tau}(V\setminus S_V)$, $H\cong \oplus_{\tau=1}^T (\mathbb{Z}/l_\tau\mathbb{Z})^{n}$. Now apply the above result for each $P_{l_\tau}(V\setminus S_V),H_\tau:=(\mathbb{Z}/l_\tau\mathbb{Z})^{n}$.
\end{proof}

We can count the number of equivalence classes of solutions for such embedding problems as well.

\begin{thm}\label{5.4}
Let $G$ be a cyclic group of order $p^a$, $l$ be a prime number different from $p$. Let $H=(\ZZ/l\ZZ)^{\oplus n}$ be a $G$-module, $\theta:G\longrightarrow \Aut(H)$ be the $G$-action, $\alpha:\pi_1^{\e}(X\setminus S_X)\twoheadrightarrow G$ be an epimorphism. Let $\gamma_{bi}$ (resp. $\gamma'_{bi}$) be the multiplicity of $\FF_l[x]/P_{bi}(x)$ in the $G$-module $P_l(V\setminus S_V)$ (resp. $H$).
Let $n_0$ (resp. $u$) be the dimension of $P_l(V\setminus S_V)^G$ (resp. $H^G$). Then $$ NSExt(\theta,\alpha)=[\Pi_{b=1}^a\Pi_{i=1}^{p^{b-1}(p-1)/d_b} (n_{bi}/n'_{bi})]\bar{n}/\overline{n'},$$ where $n_{bi} =\Pi_{r=o}^{\gamma'_{bi}-1}(\sum_{s=r}^{\gamma_{bi}-1}l^{d_bs})$, $n'_{bi} =\Pi_{r=o}^{\gamma'_{bi}-1}(\sum_{s=r}^{\gamma'_{bi}-1}l^{d_bs})$, $\bar{n}=\Pi_{r=o}^{u-1}(\sum_{s=r}^{n_0-1}l^s)$ and $\overline{n'}=\Pi_{r=o}^{u-1}(\sum_{s=r}^{u-1}l^s)$.
\end{thm}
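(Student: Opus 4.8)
The plan is to reduce the counting of solutions to a question in the representation theory of the semisimple algebra $\FF_l[G]$, and from there to a count of subspaces over finite fields.

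First, by Corollary \ref{NSExt}, $NSExt(\theta,\alpha)$ equals the number of distinct $G$-submodules of $P_l(V\setminus S_V)$ that are isomorphic to $H$ as $G$-modules. Since $l\neq p$ and $|G|=p^a$, Maschke's theorem guarantees that $\FF_l[G]$ is semisimple, so both $P_l:=P_l(V\setminus S_V)$ and $H$ decompose canonically into isotypic components indexed by the irreducible $\FF_l[G]$-modules, which by the Note following Lemma \ref{Rep} are the trivial module and the $S_{bi}:=\FF_l[x]/P_{bi}(x)$. Writing $P_l\cong \FF_l^{n_0}\oplus\bigoplus_{b,i}S_{bi}^{\gamma_{bi}}$ and $H\cong \FF_l^{u}\oplus\bigoplus_{b,i}S_{bi}^{\gamma'_{bi}}$, the structural fact I would exploit is that every submodule $M\subseteq P_l$ splits as the direct sum of its intersections with the isotypic components of $P_l$. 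Hence $M\cong H$ if and only if its trivial-isotypic part is isomorphic to $\FF_l^{u}$ and its $S_{bi}$-isotypic part is isomorphic to $S_{bi}^{\gamma'_{bi}}$ for every $(b,i)$, so the total count factors as a product of independent local counts, one per isotypic component.

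Next I would carry out each local count. On the trivial component $\FF_l^{n_0}$ the $G$-action is trivial, so a submodule isomorphic to $\FF_l^{u}$ is just a $u$-dimensional $\FF_l$-subspace, and the number of these is the Gaussian binomial coefficient $\binom{n_0}{u}_l=\prod_{r=0}^{u-1}(l^{n_0}-l^r)\big/\prod_{r=0}^{u-1}(l^{u}-l^r)$. For a nontrivial component, Schur's lemma identifies $\End\nolimits_G(S_{bi})$ with the field $\FF_l[x]/P_{bi}(x)\cong\FF_{l^{d_b}}$, so $S_{bi}^{\gamma_{bi}}$ is a $\gamma_{bi}$-dimensional vector space over $\FF_{l^{d_b}}$ compatibly with the $G$-action, and the isotypic-component correspondence $S_{bi}^{\gamma_{bi}}\cong S_{bi}\otimes_{\FF_{l^{d_b}}}\Hom\nolimits_G(S_{bi},S_{bi}^{\gamma_{bi}})$ shows that submodules isomorphic to $S_{bi}^{\gamma'_{bi}}$ correspond bijectively to $\gamma'_{bi}$-dimensional $\FF_{l^{d_b}}$-subspaces. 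Their number is the Gaussian binomial $\binom{\gamma_{bi}}{\gamma'_{bi}}_{l^{d_b}}$ over the field with $l^{d_b}$ elements.

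Finally, I would match these Gaussian binomials to the quantities in the statement via the identity $\sum_{s=r}^{N-1}q^s=(q^N-q^r)/(q-1)$. With $q=l$ this gives $\bar n=(l-1)^{-u}\prod_{r=0}^{u-1}(l^{n_0}-l^r)$ and the analogous expression for $\overline{n'}$, whence $\bar n/\overline{n'}=\binom{n_0}{u}_l$; with $q=l^{d_b}$ the identical manipulation gives $n_{bi}/n'_{bi}=\binom{\gamma_{bi}}{\gamma'_{bi}}_{l^{d_b}}$. Multiplying the local counts over all isotypic components then yields precisely the asserted formula. The step needing the most care is justifying that the count genuinely factors through the isotypic decomposition, together with the precise bijection, in each nontrivial component, between $G$-submodules of isomorphism type $S_{bi}^{\gamma'_{bi}}$ and $\FF_{l^{d_b}}$-subspaces; the remaining work is the standard finite-field subspace count and the bookkeeping identity above. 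I would also record the degenerate case $\gamma'_{bi}=0$, where the Gaussian binomial and the empty products $n_{bi},n'_{bi}$ all equal $1$ and contribute trivially.
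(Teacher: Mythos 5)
Your proposal is correct and follows essentially the same route as the paper: both reduce via Corollary \ref{NSExt} to counting $G$-submodules of $P_l(V\setminus S_V)$ isomorphic to $H$, decompose everything into isotypic components using Lemma \ref{Rep}, and perform an independent subspace count in each component before multiplying. The only difference is presentational: you identify each local count as a Gaussian binomial coefficient over $\End_G(\FF_l[x]/P_{bi}(x))\cong\FF_{l^{d_b}}$, whereas the paper obtains the same ratio $n_{bi}/n'_{bi}$ by counting ordered choices of $G$-stable irreducible summands and dividing by the corresponding count inside $H$.
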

\begin{proof}
Let $P_l(V\setminus S_V)\cong (\FF_l)^{n_0}\oplus(\oplus_{b=1}^a \oplus_{i=1}^{p^{b-1}(p-1)/d_b}(\FF_l[x]/P_{bi}(x))^{\gamma_{bi}})$ and fix an isomorphism $H\cong (\FF_l)^u\oplus(\oplus_{b=1}^a \oplus_{i=1}^{p^{b-1}(p-1)/d_b}(\FF_l[x]/P_{bi}(x))^{\gamma'_{bi}})$ (by Lemma 5.1). For $v\in \ker(P_{bi}(\sigma))\setminus\{0\}\subset P_l(V\setminus S_V) $, $\{v, \sigma v,\ldots, \sigma^{d_b-1}v\}$ is linearly independent and $\{\sum_{j=0}^{d_b-1}\alpha_j\sigma^jv| \alpha_j\in\FF_l\}$ is a $G$-stable irreducible subspace of $\ker P_{bi}(\sigma)$.
The number of $d_b$-dimensional $G$-stable subspace of $\ker(P_{bi}(\sigma))$ is $a_{bi}=\frac{l^{d_b\gamma_{bi}}-1}{l^{d_b}-1}=(l^{d_b(\gamma_{bi}-1)}+l^{d_b(\gamma_{bi}-2)}+\ldots+ l^{d_b}+1)$ and the same for $\ker_H(P_{bi}(\sigma))$ is $a'_{bi}=\frac{l^{d_b\gamma'_{bi}}-1}{l^{d_b}-1}$.
The number of ways to choose first $\gamma'_{bi}$ components up to $G$-module automorphism
$$
=\frac{a_{bi}(a_{bi}-1)(a_{bi}-\frac{l^{2d_b}-1}{l^{d_b}-1})(a_{bi}-\frac{l^{3d_b}-1}{l^{d_b}-1})\ldots  (a_{bi}-\frac{l^{(\gamma'_{bi}-1)d_b}-1}{l^{d_b}-1})}{a'_{bi}(a'_{bi}-1)(a'_{bi}-\frac{l^{2d_b}-1}{l^{d_b}-1})(a'_{bi}-\frac{l^{3d_b}-1}{l^{d_b}-1})\ldots  (a'_{bi}-\frac{l^{(\gamma'_{bi}-1)d_b}-1}{l^{d_b}-1})}
=\frac{n_{bi}}{n'_{bi}}.
$$
Similarly, the number of ways to choose first $u$ components of $P_l(V\setminus S_V)^G$ up to $G$-module isomorphism is $\bar{n}/\overline{n'}$. Then we get $[\Pi_{b=1}^a\Pi_{i=1}^{p^{b-1}(p-1)/d_b} (n_{bi}/n'_{bi})]\bar{n}/\overline{n'}$  distinct $G$-submodules of $P_l(V\setminus S_V)$ isomorphic to $H$. 
Now apply Corollary \ref{NSExt}.
\end{proof}

Let $\sigma$ be a generator of a cyclic group $G$ of order $p^a$, $c$ be a positive integer and  $M$ be a $\ZZ/l^c\ZZ[G]$-module. Let $\zeta_{p^b}$ be a primitive $p^b$-th root of unity in $\mathbb{C}$, $\Phi_{p^b}$ be the monic polynomial of $\zeta_{p^b}$ over $\QQ$ and $r_b=\frac{p^{b-1}(p-1)}{d_b}$ for $1\leq b\leq a$. Let the minimal primes of $l\ZZ[\zeta_{p^b}]$ be $Q_{b1},\ldots, Q_{br_b}$.
Then we have a $G$-stable decomposition 
\begin{align*}
(\ZZ/l^c\ZZ)[G] \cong \frac{\ZZ[x]}{(l^c,x^{p^a}-1)}
                   &= \frac{{\ZZ[x]}/{(x-1)} \oplus (\oplus_{b=1}^a ({\ZZ[x]}/(\Phi_{p^b}(x))))}{(\overline{l^c})}\\ 
                   &= \frac{(\ZZ/{l^c}\ZZ)[x]}{(x-1)} \oplus \frac{\ZZ[\zeta_p]}{(l^c)} \oplus\ldots\oplus\frac{\ZZ[\zeta_{p^a}]}{(l^c)}\\
                   &= \frac{(\ZZ/l^c\ZZ)[x]}{(x-1)} \oplus (\oplus_{b=1}^a (\oplus_{i=1}^{r_b} \frac{\ZZ[\zeta_{p^b}]}{Q_{bi}^c})).
\end{align*}

Hence $M\cong \ker(\sigma-\textbf{I})\oplus (\oplus_{b=1}^a (\oplus_{j=1}^{r_b} \frac{M}{Q_{bj}^cM}))$ (some of the summands may be trivial).
\begin{lm}
Let the hypothesis be as above.
Then $M$ can be expressed as a direct sum of indecomposable $G$-submodules isomorphic to $\ZZ/l^i\ZZ$, $\ZZ[\zeta_{p^b}]/Q_{bj}^i, 1\leq i\leq c$, $1\leq b\leq a$. The number of direct summands isomorphic to $\ZZ/l^i\ZZ$ is $f'_{i-1}-f'_i$ where $f'_i=\dim_{(\ZZ_l/l\ZZ_l)}(\frac{l^iM^G}{l^{i+1}M^G})$. The number of summands isomorphic to $\ZZ[\zeta_{p^b}]/Q_{bj}^i$ is $f_{b,i-1,j}-f_{bij}$ where $f_{bij}=\dim_{(\ZZ[\zeta_{p^b}]/Q_{bj})}(\frac{Q_{bj}^iN_{bj}}{Q_{bj}^{i+1}N_{bj}})$ and $N_{bj}=M/Q_{bj}^cM$. 
\end{lm}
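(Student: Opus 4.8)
The plan is to reduce the whole statement to the structure theory of finitely generated modules over a finite chain ring (a local Artinian principal ideal ring), applied separately to each factor of the semilocal ring $R:=(\ZZ/l^c\ZZ)[G]$. The displayed isomorphism already exhibits $R$ as a finite product of local rings, and the displayed decomposition of $M$ splits $M$ as the corresponding direct sum of blocks; so it suffices to analyse each block on its own. Throughout I take $M$ finitely generated, as it is in the intended application (being finite), so that all the dimensions below are finite.

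First I would check that every factor of $R$ is a finite chain ring. The factor $(\ZZ/l^c\ZZ)[x]/(x-1)\cong \ZZ/l^c\ZZ$ is local with principal maximal ideal $(l)$, residue field $\FF_l$, and $l^c=0$. For each pair $(b,j)$ the factor $R_{bj}:=\ZZ[\zeta_{p^b}]/Q_{bj}^c$ is the quotient of the discrete valuation ring $\ZZ[\zeta_{p^b}]_{Q_{bj}}$ by the $c$-th power of its maximal ideal; here one uses $l\neq p$, so that $l$ is unramified in $\ZZ[\zeta_{p^b}]$ and $Q_{bj}$ occurs with multiplicity one. Hence $R_{bj}$ is local, its maximal ideal (the image of $Q_{bj}$) is principal, its residue field is $\ZZ[\zeta_{p^b}]/Q_{bj}$, and its ideals form the single chain $0=Q_{bj}^c\subset\cdots\subset Q_{bj}\subset R_{bj}$. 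Under the idempotent splitting of $R$, the summand $\ker(\sigma-\mathbf{I})=M^G$ is exactly the $(\ZZ/l^c\ZZ)$-block, since on every other block $\sigma-\mathbf{I}$ acts as multiplication by $\zeta_{p^b}-1$, which is a unit because it is coprime to $l$; and $N_{bj}=M/Q_{bj}^cM$ is the $R_{bj}$-block. Both identifications are furnished by the decomposition of $M$ stated just before the lemma.

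Next I would invoke the structure theorem for a finitely generated module over such a chain ring $R_k$ with maximal ideal $\mathfrak{m}_k=(\pi)$ and $\pi^c=0$: it is a direct sum of cyclic modules $R_k/\mathfrak{m}_k^{i}$ with $1\le i\le c$. The cleanest justification is to view the module as a finitely generated torsion module over the ambient discrete valuation ring that is annihilated by $\pi^c$, and to apply the elementary divisor theorem over that DVR. Applied to $M^G$ over $\ZZ/l^c\ZZ$ and to each $N_{bj}$ over $R_{bj}$, this produces precisely the asserted decomposition of $M$ into indecomposables $\ZZ/l^i\ZZ$ (with trivial $\sigma$-action) and $\ZZ[\zeta_{p^b}]/Q_{bj}^{i}$ (with $\sigma$ acting as $\zeta_{p^b}$).

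It remains to pin down the multiplicities by the usual residue-field dimension count. For a chain ring $R_k$ and $M_k\cong\bigoplus_i (R_k/\mathfrak{m}_k^{i})^{m_i}$ one has $\dim_{\kappa}\big(\mathfrak{m}_k^{j}M_k/\mathfrak{m}_k^{j+1}M_k\big)=\sum_{i>j}m_i=:f_j$, since each $\pi^{t}/\pi^{t+1}$ is one-dimensional over $\kappa=R_k/\mathfrak{m}_k$; telescoping gives $m_j=f_{j-1}-f_j$. Specializing $(R_k,\mathfrak{m}_k,M_k)$ to $(\ZZ/l^c\ZZ,(l),M^G)$ yields the count $f'_{i-1}-f'_i$ for $\ZZ/l^i\ZZ$, and to $(R_{bj},Q_{bj},N_{bj})$ yields $f_{b,i-1,j}-f_{bij}$ for $\ZZ[\zeta_{p^b}]/Q_{bj}^{i}$. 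The only genuine obstacle is the input $l\neq p$: it is what makes each factor of $R$ an unramified, hence chain, local ring, so that the structure theorem applies verbatim and the graded residue pieces $\mathfrak{m}_k^{t}/\mathfrak{m}_k^{t+1}$ are one-dimensional; once that is secured, everything else is classical invariant-factor bookkeeping.
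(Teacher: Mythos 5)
Your proposal is correct and follows essentially the same route as the paper: both split $M$ along the idempotent decomposition of $(\ZZ/l^c\ZZ)[G]$ stated before the lemma, view $M^G$ and each block $N_{bj}$ as a torsion module over $\ZZ/l^c\ZZ$ resp. the DVR $\ZZ[\zeta_{p^b}]_{Q_{bj}}$, apply the elementary divisor theorem, and read off the multiplicities from the residue-field dimensions of $\mathfrak{m}^i M_k/\mathfrak{m}^{i+1}M_k$ by telescoping. Your write-up merely makes explicit a few points the paper leaves implicit (unramifiedness of $l$, the identification of $M^G$ with the $(x-1)$-block via the unit $\zeta_{p^b}-1$), which is fine.
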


\begin{proof}
Let $R=\mathbb{Z}[\zeta_{p^b}]$.
As $N_{bj}$ is a $R/{Q_{bj}^c}$-module and $R/Q_{bj}^c\cong R_{Q_{bj}}/Q_{bj}^cR_{Q_{bj}}$, $N_{bj}$ is a torsion module over $R_{Q_{bj}}$. Hence we have the following decomposition as $G$-modules:
$$N_{bj}\cong \oplus_{i=1}^c (R_{Q_{bj}}/(Q_{bj}^i))^{f_{b,i-1,j}-f_{bij}} \cong \oplus_{i=1}^c(R/Q_{bj}^i)^{f_{b,i-1,j}-f_{bij}}.$$
Note that $\ker(\sigma-\id)$ is a $\ZZ/l^c\ZZ$-module with trivial $G$-action. Then again, $M^G= \ker(\sigma-\id)\cong \oplus_{i=1}^c \oplus^{f'_{i-1}-f'_i}(\ZZ/l^i\ZZ)$. Then we get the $G$-stable decomposition into indecomposable (or zero) $G$-submodules: 
\begin{equation*}
M\cong (\oplus_{i=1}^c (\ZZ/l^i\ZZ)^{f'_{i-1}-f'_i})\oplus(\oplus_{b=1}^a\oplus_{j=1}^{r_b} \oplus_{i=1}^c (\ZZ[\zeta_{p^b}]/Q_{bj}^i)^{f_{b,i-1,j}-f_{bij}})
\end{equation*} 
\end{proof}

\begin{co}\label{5.6}
Let $H=\oplus_{i=0}^c(\ZZ/l^i\ZZ)^{e_i}$ for non-negative integers $e_i$ and $M= P_{l^c}(V\setminus S_V)$. Let $f_i'$ and $f_{bij}$ be as in the above lemma.
If $e_i$ can be expressed as $e_i=e'_i+\sum_{b=1}^a d_be''_{bi}$ for $0\leq e'_i\leq f'_{i-1}-f'_i$, $0\leq e''_{bi}\leq\sum_{j=1}^{r_b}(f_{b,i-1,j}-f_{bij})$, $1\leq i\leq c$, then the embedding problem  $(\beta:H\rtimes_\theta G\twoheadrightarrow G, \alpha:\pi_1^{\e}(X\setminus S_X)\twoheadrightarrow G)$ has a proper solution for some group homomorphism $\theta:G\longrightarrow \Aut(H)$.
\end{co}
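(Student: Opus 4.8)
The plan is to produce an explicit $G$-submodule $N$ of $M=P_{l^c}(V\setminus S_V)$ that is abstractly isomorphic to $H$, to equip $H$ with the $G$-action $\theta$ transported from $N$, and then to invoke Theorem \ref{solutionsEP-submodule} to obtain the solution. First I would record the abelian-group structure of the indecomposable summands in the preceding lemma: the trivial summand $\ZZ/l^i\ZZ$ contributes a single cyclic factor of order $l^i$, whereas $\ZZ[\zeta_{p^b}]/Q_{bj}^i$ is unramified over $l$ (as $l\ne p$), so $Q_{bj}$ is locally generated by $l$ and $\ZZ[\zeta_{p^b}]/Q_{bj}^i\cong(\ZZ/l^i\ZZ)^{d_b}$ as an abelian group. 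Hence, for each $i$, the number of order-$l^i$ cyclic factors available inside $M$ is $f'_{i-1}-f'_i$ from the trivial isotypic part, together with $d_b(f_{b,i-1,j}-f_{bij})$ from each summand of type $(b,j)$ at level $i$.

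Next I would use the numerical hypothesis to assemble $N$. For each $i$ the decomposition $e_i=e'_i+\sum_{b=1}^a d_b e''_{bi}$, together with the bounds $e'_i\le f'_{i-1}-f'_i$ and $e''_{bi}\le\sum_{j=1}^{r_b}(f_{b,i-1,j}-f_{bij})$, guarantees that there are enough summands to choose $e'_i$ copies of $\ZZ/l^i\ZZ$ and, across the indices $j$, a total of $e''_{bi}$ indecomposable summands of type $(b,\cdot)$ at level $i$. Taking $N$ to be the direct sum of all chosen indecomposables yields a $G$-submodule of $M$ whose number of order-$l^i$ cyclic factors is exactly $e'_i+\sum_b d_b e''_{bi}=e_i$, so $N\cong\bigoplus_{i}(\ZZ/l^i\ZZ)^{e_i}\cong H$ as abelian groups. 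Restricting the $G$-action on $M$ to $N$ then defines $\theta\colon G\to\Aut(H)$ under this identification, and $N$ is a $G$-submodule of $M$ isomorphic to $(H,\theta)$ as $G$-modules.

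Finally I would convert the submodule $N$ into a genuine solution. Since $|H|$ is a power of $l$ and $|G|=p^a$ with $l\ne p$, we have $\gcd(|H|,|G|)=1$, so by Schur--Zassenhaus any extension of $G$ by $H$ realizing $\theta$ is the split one $H\rtimes_\theta G$; this is the $\Gamma$ in the statement. To build the cover I would split $N$ by level, $N=\bigoplus_i N_i$ where $N_i$ is the sum of the chosen level-$i$ summands, so that each $N_i$ is free over $\ZZ/l^i\ZZ$ and is a $G$-submodule lying in the $l^i$-torsion $M[l^i]$, which identifies $G$-equivariantly with $P_{l^i}(V\setminus S_V)$. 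Applying Theorem \ref{solutionsEP-submodule} at level $i$ produces an $N_i$-Galois cover of $V$ that is Galois over $X$; the normalized fibre product over $V$ of these covers is the $H$-cover attached to $N$, its Galois-ness over $X$ following from the $G$-stability of $N$ via Proposition \ref{fp}, and its Galois group being the split extension $H\rtimes_\theta G$.

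I expect the main obstacle to be this last step, because Theorem \ref{solutionsEP-submodule} is phrased for a kernel that is \emph{free} over $\ZZ/m\ZZ$, while $H$ here is a mixed-order $l$-group. The work lies in checking that decomposing $N$ by level and forming the normalized fibre product does produce a single connected $H\rtimes_\theta G$-Galois cover of $X$: concretely, that the cyclic covers attached to the distinct $N_i$ are suitably linearly disjoint so that the fibre-product Galois group is the full $H$, and that it is the $G$-stability of $N$ as a whole (rather than of each $N_i$ separately) that makes the total cover Galois over $X$ by Proposition \ref{fp}.
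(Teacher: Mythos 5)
Your construction is essentially the paper's: both proofs compute that $\ZZ[\zeta_{p^b}]/Q_{bj}^i\cong(\ZZ/l^i\ZZ)^{d_b}$ as abelian groups (the paper via Nakayama and a cardinality count, you via unramifiedness of $l$ --- same content), use the numerical hypothesis to select enough indecomposable $G$-summands of $M$ to assemble a $G$-stable subgroup abstractly isomorphic to $H$, transport the $G$-action to define $\theta$, and finish by citing Theorem \ref{solutionsEP-submodule}. So the proposal is correct in the same sense and to the same standard as the paper's own argument.

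The one substantive point is the obstacle you flag at the end: Theorem \ref{solutionsEP-submodule} is stated for $H$ a \emph{free} $\ZZ/m\ZZ$-module (its proof starts from a type-T1 generating set of elements of order $m$), whereas here $H$ has mixed exponents. This is a real gap, but it is a gap in the paper's proof as well --- the paper simply writes ``the result now follows from Theorem \ref{solutionsEP-submodule}'' without comment. Your proposed repair (decompose $N=\oplus_i N_i$ by level, note $N_i$ is a free $G$-stable $\ZZ/l^i\ZZ$-submodule of the $l^i$-torsion of $M$, which identifies with $P_{l^i}(V\setminus S_V)$, apply the theorem levelwise, and take the normalized fibre product, using $G$-stability of all of $N$ together with Proposition \ref{fp} for Galoisness over $X$) is the natural way to close it; the remaining check is the linear disjointness of the covers attached to the distinct $N_i$, which follows from the $N_i$ being independent direct summands of $M$ by the same algebra-of-sections argument as in Proposition \ref{C}(ii). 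In short: you have not missed anything the paper supplies, and you have correctly identified the one step the paper elides.
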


\begin{proof}
Let $N_{bij}=R/Q_{bj}^i\cong R_{Q_{bj}}/(Q_{bj}R_{Q_{bj}})^i$. It is a module over $\ZZ/l^i\ZZ$. Note that $N_{bij}/lN_{bij}\cong R_{Q_{bj}}/Q_{bj}R_{Q_{bj}}$ and the latter is of dimension $d_b$ over $\ZZ/l\ZZ$. By Nakayama's lemma $N_{bij}$ has a minimal generating set of cardinality $d_b$ over $\ZZ/l^i\ZZ$. Since $|N_{bij}|=l^{id_b}$, we have an isomorphism of abelian groups $R/Q_{bj}^i\cong (\ZZ/l^i\ZZ)^{d_b}$. Then $(\ZZ/l^i\ZZ)^{e'_i}\oplus (\oplus_{b=1}^a(\ZZ/l^i\ZZ)^{d_b})^{e''_{bi}}$ can be identified as a $G$-stable submodule of $(\ZZ/l^i\ZZ)^{f'_{i-1}-f'_i}\oplus( \oplus_{b=1}^a\oplus_{j=1}^{r_b} (\ZZ[\zeta_{p^b}]/(t_{bj}^i))^{f_{b,i-1,j}-f_{bij}})$ (which is a direct summand of $P_{l^c}(V\setminus S_V)$ by the above lemma). Hence we get a $G$-module monomorphism from $H$ into $P_{l^c}(V\setminus S_V)$. The result now follows from Theorem \ref{solutionsEP-submodule}.
\end{proof}

\end{document}